%
%
%
%
%
\documentclass[a4paper, 10pt, preprint]{elsarticle}
\usepackage{amsmath, amssymb, amsthm, dsfont, a4}
\usepackage{hyperref}
\usepackage[all]{xy}
\usepackage[active]{srcltx}
\usepackage[short,nodayofweek]{datetime}

\newcommand\FF{{\mathbb F}}
\newcommand\GG{{\mathbb G}}
\newcommand\NN{{\mathbb N}}
\newcommand\CC{{\mathbb C}}
\newcommand\TT{{\mathbb T}}

\newcommand\pitilde{\tilde{\pi}}

\newcommand{\mot}{\mathsf{M}}  
\newcommand{\dumot}{\mathfrak{M}}  


\newcommand{\transp}[1]{#1^{\rm tr}}   

\newcommand{\eps}{\varepsilon}
\newcommand{\rank}{\mathrm{rk}_{\FF_q[t]}}
\newcommand{\bigmid}{\, \Big| \,}
\newcommand{\trk}{t\textrm{-}\mathrm{rk}}


\newcommand{\dphi}{d\phi}
\newcommand{\hd}[2]{\partial_t^{(#1)}\!\!\left(#2\right)}
\newcommand{\hde}[1]{\partial_t^{(#1)}}  
\newcommand{\e}{\exp_E}  
\newcommand{\id}{\mathrm{id}}
\newcommand{\res}{\mathrm{res}_{t=\theta}}
\newcommand{\ev}{\mathrm{ev}_1}

\newcommand{\tate}{\CC_\infty\cs{t}}  
\newcommand{\laurent}{\CC_\infty(\!(t)\!)}  
\newcommand{\powser}{\CC_\infty[\![t]\!]}  
\newcommand{\ps}[1]{[\![#1]\!]}  
\newcommand{\ls}[1]{(\!(#1)\!)}
\newcommand{\cs}[1]{\langle #1 \rangle} 


\newcommand{\vect}[1]{\text{\boldmath $#1$\unboldmath}} 
\newcommand{\svect}[2]{\left( \begin{matrix} {#1}_{1}\\ \vdots \\ {#1}_{#2}\end{matrix}\right)}

\newcommand{\betr}[1]{\lvert #1\rvert}
\newcommand{\norm}[1]{\lVert #1\rVert}


\DeclareMathOperator{\Hom}{Hom}
\DeclareMathOperator{\End}{End}

\DeclareMathOperator{\Mat}{Mat}
\DeclareMathOperator{\GL}{GL}
\DeclareMathOperator{\Lie}{Lie}

\DeclareMathOperator{\Ker}{ker}
\DeclareMathOperator{\Coker}{coker}


\theoremstyle{plain}
\newtheorem{thm}{Theorem}[section]
\newtheorem{cor}[thm]{Corollary}
\newtheorem{lem}[thm]{Lemma}
\newtheorem{prop}[thm]{Proposition}

\newtheorem*{mainthm}{Main Theorem}

\theoremstyle{definition}
\newtheorem{defn}[thm]{Definition}
\newtheorem{exmp}[thm]{Example}
\newtheorem{rem}[thm]{Remark}


\begin{document}

\title{Periods of $t$-modules as special values}
\author[am]{Andreas Maurischat}
\ead[am]{andreas.maurischat@matha.rwth-aachen.de}
\address[am]{Lehrstuhl A f\"ur Mathematik, RWTH Aachen University, Germany }

\date{\formatdate{08}{08}{2018}}



\begin{abstract}
In this article we show that all periods of uniformizable $t$-modules (resp.~their coordinates) can be obtained via specializing a rigid analytic trivialization of a related dual $t$-motive at $t=\theta$. The proof is even constructive.
The central object in the construction is a subset $H$ of the Tate algebra points of $E$ which turns out to be isomorphic to the period lattice of $E$ via kind of generating series in one direction and residues in the other. This isomorphism even holds for arbitrary $t$-modules $E$, even non-abelian ones.
\end{abstract}
\begin{keyword}
t-modules\sep  t-motives\sep  periods\sep  hyperderivatives

\MSC 11J93 \sep 11G09\sep 13N99 
\end{keyword}

\maketitle

\setcounter{tocdepth}{1}
\tableofcontents

\section{Introduction}\label{sec:intro}


Questions on algebraic independence of periods of $t$-modules are of great interest in number theory in positive characteristic. The most prominent period, the Carlitz period 
\[ \tilde{\pi}=\lambda_\theta \theta \prod_{j \geq 1} (1 - \theta^{1-q^j})^{-1} \in K_\infty(\lambda_\theta), \]
is the function field analog of the complex number $2\pi i$, and it was already proven 
by Wade in 1941 that $\tilde{\pi}$ is transcendental over $K$ (see \cite{liw:cqtg}).
Here, $K$ is a finite extension of the rational function field $\FF_q(\theta)$ over the finite field $\FF_q$,  $K_\infty$ is the completion of $K$ with respect to the absolute value
$|\cdot|_\infty$ given by $|\theta|_\infty=q$, and $\lambda_\theta\in \bar{K}$ is a $(q-1)$-th root of $-\theta$
in an algebraic closure $\bar{K}$ of $K$.

For proving algebraic independence of periods (and other ``numbers'' like zeta values and logarithms), the ABP-criterion (cf.~\cite[Thm.~3.1.1]{ga-wb-mp:darasgvpc}) and a consequence of it - which is part of the proof of \cite[Thm.~5.2.2]{mp:tdadmaicl} - as well as a generalization of Chang (cf.~\cite[Thm.~1.2]{cc:nrvabpc}) turned out to be very useful. To state the consequence given by Papanikolas, let $\CC_\infty$ denote the completion of the algebraic closure of $K_\infty$, and
$\powser$ the power series ring over $\CC_\infty$, as well as 
$\tate$ the subring consisting of those power series which converge on the closed unit disc $|t|_\infty\leq 1$. 
Finally, let $\mathbb{E}$ be the subring of entire functions, i.e.~of those power series which converge for all $t\in \CC_\infty$ and whose coefficients lie in a finite extension of $K_\infty$.
On $\tate$ we consider the inverse Frobenius twist $\sigma$ given by
\[ \sigma( \sum_{i=0}^\infty x_it^i)=\sum_{i=0}^\infty (x_i)^{1/q}t^i, \]
as well as $\tau=\sigma^{-1}:\tate\to \tate$ which both will be applied on matrices entry-wise.

\begin{thm} (See proof of \cite[Thm.~5.2.2]{mp:tdadmaicl})  \label{thm:conseq-of-abp}
Let $\Phi\in \Mat_{r\times r}(\bar{K}[t])$ be a matrix with
determinant $\det(\Phi)=c(t-\theta)^s$ for some $c\in \bar{K}^\times$ and $s\geq 1$.
If $\Psi\in \GL_r(\TT)\cap \Mat_{r\times r}(\mathbb{E})$ is a matrix such that
\[  \sigma(\Psi)=\Psi\Phi , \]
and $\Psi(\theta)$ the matrix obtained by evaluating all entries of $\Psi$ at  $t=\theta$,
then all algebraic relations over $\bar{K}$ between the entries of $\Psi(\theta)$ are specializations at  $t=\theta$ of the algebraic relations over $\bar{K}[t]$ between the entries of $\Psi$. 
\end{thm}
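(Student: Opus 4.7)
The plan is to reduce the statement about arbitrary algebraic relations to the purely linear case, to which the ABP criterion \cite[Thm.~3.1.1]{ga-wb-mp:darasgvpc} applies directly. In its linear form, ABP can be stated thus: if $\sigma(\Xi)=A\Xi$ for some column vector $\Xi$ whose entries lie in $\mathbb{E}$ and some matrix $A \in \Mat_{N\times N}(\bar{K}[t])$ with $\det(A) = c(t-\theta)^s$, $s\geq 1$, then every $\bar{K}$-linear relation $\transp{\lambda}\Xi(\theta)=0$ lifts to a $\bar{K}[t]$-linear relation $\transp{\Lambda(t)}\Xi=0$ with $\Lambda(\theta)=\lambda$. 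For the given $\Psi$, $\Phi$, transposition gives $\sigma(\transp{\Psi})=\transp{\Phi}\,\transp{\Psi}$ with $\det(\transp{\Phi})=c(t-\theta)^s$, so the linear form applies directly to $\Xi=\transp{\Psi}$ (column by column).

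To handle a polynomial relation $f(\Psi(\theta))=0$ of total degree at most $d$, I would enumerate all monomials of degree $\leq d$ in the $\Psi_{ij}$ and assemble their values in a single column vector $\vec{v}$ with entries in $\mathbb{E}$. The relation $f(\Psi(\theta))=0$ then reads as a $\bar{K}$-linear relation $\transp{\lambda}\vec{v}(\theta)=0$, where the components of $\lambda$ are the coefficients of $f$. Using $\sigma(\Psi_{ij})=\sum_k \Psi_{ik}\Phi_{kj}$, one checks that $\sigma$ sends a monomial of degree $d'$ to a $\bar{K}[t]$-linear combination of monomials of the same degree $d'$, so $\sigma(\vec{v})=\Phi_d\vec{v}$ for a block-diagonal matrix $\Phi_d\in\Mat_{N\times N}(\bar{K}[t])$ whose $d'$-th block is the $d'$-th symmetric power of $\Phi$. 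Since $\det(\mathrm{Sym}^{d'}(\Phi))$ is a power of $\det(\Phi)=c(t-\theta)^s$, we have $\det(\Phi_d)=c_d(t-\theta)^{s_d}$ with $s_d\geq 1$, and so the hypotheses of the linear case are satisfied.

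Next, applying the linear case to $\vec{v}$ and $\Phi_d$, I would obtain a $\bar{K}[t]$-linear relation $\transp{\Lambda(t)}\vec{v}=0$ with $\Lambda(\theta)=\lambda$. Translating back through the monomial dictionary, this yields $F \in \bar{K}[t,X_{ij}]$ with $F(\Psi)=0$ in $\TT$ and $F(\theta,X_{ij})=f(X_{ij})$, which is precisely the desired polynomial lift of $f$ to an algebraic relation over $\bar{K}[t]$ between the entries of $\Psi$.

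The main obstacle is the bookkeeping associated with the symmetric-power construction: one must verify that $\Phi_d$ has the claimed block structure, that its determinant is a product of powers of $\det(\Phi)$, and that the monomial dictionary faithfully transports linear relations on $\vec{v}$ back to polynomial relations in the $\Psi_{ij}$. These are standard but slightly tedious exercises in multilinear algebra; once they are in place, the theorem follows from a single invocation of the ABP criterion in its linear form.
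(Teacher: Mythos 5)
The paper states this theorem without proof, deferring to Papanikolas' proof of \cite[Thm.~5.2.2]{mp:tdadmaicl}; your reduction of polynomial relations to the linear ABP criterion \cite[Thm.~3.1.1]{ga-wb-mp:darasgvpc} via the vector of all monomials of bounded degree is indeed the standard mechanism that underlies that argument, so your proof is correct and essentially the intended one. One small bookkeeping slip: since $\sigma(\Psi_{ij})=\sum_k\Psi_{ik}\Phi_{kj}$ acts only on the column index, the degree-$1$ block of $\Phi_d$ is $I_r\otimes\transp{\Phi}$ on the $r^2$ entries, and the degree-$d'$ block is $\mathrm{Sym}^{d'}(I_r\otimes\transp{\Phi})$ rather than $\mathrm{Sym}^{d'}(\Phi)$; its determinant is still a positive power of $\det\Phi$ (and the degree-$0$ block is $[1]$, contributing determinant $1$), so the hypothesis $\det(\Phi_d)=c_d(t-\theta)^{s_d}$ with $s_d\geq1$ holds whenever $d\geq1$, which is all you need.
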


Actually, the matrix $\Phi$ often occurs as a matrix which represents the $\sigma$-action on a dual $t$-motive with respect to some $\bar{K}[t]$-basis, and $\Psi$ is the corresponding rigid analytic trivialization.

In \cite[Prop.~3.1.3]{ga-wb-mp:darasgvpc}, the authors showed that the condition $\Psi\in \Mat_{r\times r}(\mathbb{E})$ is already implied by the other conditions. Furthermore, the determinant condition $\det(\Phi)=c(t-\theta)^s$ for some $c\in \bar{K}^\times$ and $s\geq 1$ implies that also $\Psi(\theta)$ is invertible,
and in particular that $\Psi^{-1}$ does not have poles at $t=\theta$.

Our main theorem roughly states that all periods of uniformizable abelian $t$-modules (resp.~their coordinates) can be obtained as the entries of $\Psi^{-1}(\theta)=\Psi(\theta)^{-1}$ of a related dual $t$-motive. In combination with the theorem stated above, this enables one to study algebraic independence of periods by
studying the algebraic independence of the corresponding functions in the rigid analytic trivialization which is often
more accessible.

For example, the proof in \cite[Sect.~8]{am:ptmaip} that the coordinates of a fundamental period of the $n$-th tensor power of the Carlitz module are algebraically independent over $\bar{K}$ uses a special case of the strategy given here (comp.~Example~\ref{ex:carlitz-tensor-power}). 

This example also shows that the last step in the chain of contructions outlined in \cite[\S 4.5]{db-mp:ridmtt}
is not correct for $t$-modules in general, but only for certain kinds of $t$-modules like Drinfeld modules.

\medskip

For stating our main theorem more precisely, we need some more preparation.
The \emph{hyperdifferential operators} (also called iterative higher derivations) with respect to the variable~$t$ are the family of $\CC_\infty$-linear maps $(\hde{n})_{n\geq 0}$  on $\CC_\infty\ls{t}$ given by
\[    \hd{n}{\sum_{i=i_0}^\infty x_it^i} =\sum_{i=i_0}^\infty \binom{i}{n} x_it^{i-n} \]
for Laurent series $\sum_{i=i_0}^\infty x_it^i\in \laurent$, 
where $\binom{i}{n}\in \FF_p\subset \FF_q$ is the residue of the usual binomial coefficient.

For a square matrix $\Theta\in \Mat_{r\times r}(\laurent)$, we then define its $n$-th \emph{prolongation} ($n\geq 0$) to be
the $r(n+1)\times r(n+1)$-matrix given in block matrix form as
\begin{equation}\label{eq:rho_n-matrix} \rho_{[n]}(\Theta):= \begin{pmatrix}
\Theta & \hd{1}{\Theta}  & \hd{2}{\Theta} & \cdots& \hd{n}{\Theta} \\
0 & \Theta &  \hd{1}{\Theta} &  \ddots   & \vdots \\ 
\vdots &\ddots  & \ddots & \ddots &   \hd{2}{\Theta}\\
\vdots & & \ddots  &  \Theta &  \hd{1}{\Theta} \\
0 &  \cdots & \cdots & 0 &  \Theta
\end{pmatrix}, 
\end{equation}
where $\hd{1}{\Theta}$ etc. is
the matrix where we apply the hyperdifferential operators entry-wise. For more information on properties of 
these prolongations see \cite{am:ptmaip}. In this article, we just need that if $\Phi$ and $\Psi$ meet the conditions of Thm.~\ref{thm:conseq-of-abp}, then for all $n\geq 1$ also $ \rho_{[n]}(\Phi)$ and $\rho_{[n]}(\Psi)$ meet the conditions.
In particular, $\rho_{[n]}(\Psi)$ can be regarded as a rigid analytic trivialization of a dual $t$-motive.

However, apart from Section \ref{sec:dual-t-motive}, we will not deal with dual $t$-motives, but only recognize certain matrices satisfying the conditions in Thm.~\ref{thm:conseq-of-abp}. 
For $t$-motives, and also for dual $t$-motives in Section \ref{sec:dual-t-motive}, we use the terminologies of Brownawell-Papanikolas in \cite{db-mp:ridmtt} (see also \cite{am:ptmaip}) which are more general than those in \cite{ga:tm} and \cite{ga-wb-mp:darasgvpc}, respectively, since finite generation as $K[t]$-modules is not assumed. For perfect coefficient fields this more general notion of a $t$-motive already appears in Goss' book (cf.~\cite[Def.~5.4.2]{dg:bsffa}).

\begin{mainthm}[see Prop.~\ref{prop:def-of-r} and Thm.~\ref{thm:periods-as-special-values}]
Let $E$ be a uniformizable abelian $t$-module over $K$ of dimension $d$ and rank $r$. Let $\mot$ 
be the $t$-motive  associated to $E$ with $K[t]$-basis $\{m_1,\ldots, m_r\}$, let $\Theta$ be the matrix representing the $\tau$-action on $\mot$, i.e.~$\tau(m_i)=\sum_{j=1}^r \Theta_{ij} m_j$ for all $i=1,\ldots, r$, 
and let $\Upsilon\in \GL_r(\tate)$
be a rigid analytic trivialization for $\mot$ with respect to $\{m_1,\ldots, m_r\}$, i.e.~
\[  \tau\Bigl(\Upsilon\cdot \svect{m}{r} \Bigr)=\Upsilon\cdot \svect{m}{r}. \]

Then for any $B\in \GL_r(K[t])$, the matrices $\tilde{\Theta}=B\Theta\sigma(B)^{-1}\in \Mat_{r\times r}(\bar{K}[t])$ and 
$R:=\tau(\Upsilon)B^{-1}\in \GL_r(\tate)$, as well as the matrices $\rho_{[d-1]}(\tilde{\Theta})$ and $\rho_{[d-1]}(R)$ satisfy the conditions of $\Phi$ and $\Psi$ in Thm.~\ref{thm:conseq-of-abp}. Furthermore, there is a coordinate system of $E$ with corresponding isomorphism $E\cong \GG_{a}^d$, and
$B\in \GL_r(K[t])$ such that the coordinates of a basis of the period lattice $\Lambda$ are the values at $t=\theta$ 
of certain entries of the matrix $\rho_{[d-1]}( R^{-1})=\rho_{[d-1]}( R)^{-1}$.
\end{mainthm}

Our proof is even more explicit and shows which entries of $\rho_{[d-1]}(R)^{-1}$ are really used, e.g.~in some  examples one can even replace $\rho_{[d-1]}(R)^{-1}$ by $\rho_{[\alpha-1]}(R)^{-1}$ for some $\alpha<d$.

One step in the proof is an isomorphism of $\FF_q[t]$-modules between the lattice $\Lambda$ and the set of $\tau$-equivariant homomorphisms $\Hom_{K[t]}^\tau(\mot, \tate)$ via kind of generating series in one direction and taking residues at $t=\theta$ in the other direction (see Section \ref{sec:H}).
In the case of pure uniformizable $t$-modules this isomorphism is nothing else than the isomorphism induced by the pairing
\[ \begin{array}{rcll}
   \Lambda & \otimes_{\FF_q} &(\mot\otimes_{K[t]}\tate)^\tau &\longrightarrow (\tate)^\tau=\FF_q[t] \\
\lambda & \otimes & m &\longmapsto -\sum_{j=0}^\infty m\Bigl(\e\bigl((\dphi_t)^{-j-1}(\lambda)\bigr)\Bigr) t^j  
\end{array} \]
given in Anderson's paper \cite[\S 3]{ga:tm}, and in this case, Anderson also showed how to get the periods via higher residues at $t=\theta$. 

Our isomorphism, however, is much more general, since it is valid for any $t$-module, even for non-abelian ones.

\medskip

The isomorphism is actually a composition of two isomorphisms, and the connecting object is a certain $\FF_q[t]$-submodule $H$ of $E(\tate)$
(see Definition \ref{def:H-and-hat-H}). This object $H$ plays a very central role and is connecting various other
objects via natural isomorphisms, too.

\[ \xymatrix{
 & \{ (e_i)_{i\geq 0}\in T_t(E) \mid \lim_{i \to \infty}\norm{e_i}=0\} 
 \ar@{<-}[d]^{\cong \text{ (Prop.~\ref{prop:hat-H-isom-tate-module})}\eqref{item:H-isom-submodule-of-tate-module}} &  \\
 \Lambda \ar[r]^{\delta}_{\cong \text{ (Thm.~\ref{thm:iso-delta})}} &  
 H \ar[r]^(.4){\iota}_(.4){\cong \text{ (Thm.~\ref{thm:h-isom-M-tate-dual})}}
 & \Hom_{K[t]}^\tau(\mot, \tate) \\
&  (\dumot \otimes_{\CC_\infty[t]} \tate)^\sigma \ar@{<-}[u]_(.4){\cong \text{ (Thm.~\ref{thm:H-isom-dual-m-tate})}} & 
}\] 

Here $\dumot$ denotes the dual $t$-motive over $\CC_\infty$ associated to the $t$-module $E$
(see \cite[Sect.~4.4]{db-mp:ridmtt}), and $T_t(E)$ is the $t$-adic Tate module of $E$.
All these isomorphisms hold for arbitrary $t$-modules, even for non-abelian and non-$t$-finite ones.

Some compositions of the isomorphisms given here are already present in \cite[Sect.~5]{uh-akj:pthshcff}, due to
unpublished work of Anderson. For example, the isomorphism
\[ \Lambda\longrightarrow \{ (e_i)_{i\geq 0}\in T_t(E) \mid \lim_{i \to \infty}\norm{e_i}=0\} \]
can be obtained as a special case of the canonical bijection constructed between various objects given in Thm.~5.17 ibid. (see Remark \ref{rem:lambda-isom-H-in-HJ}), and in the case that $E$ is $t$-finite, the isomorphism
\[  (\dumot \otimes_{\CC_\infty[t]} \tate)^\sigma \longrightarrow \{ (e_i)_{i\geq 0}\in T_t(E) \mid \lim_{i \to \infty}\norm{e_i}=0\} \]
is a special case of the bijection constructed in Thm.~5.18 ibid. (see Remark \ref{rem:dual-motive-to-H-in-HJ}). 

Our isomorphisms, however, are obtained in a natural way, e.g.~as connecting homomorphisms using the snake lemma, and the isomorphism $H\to  \Hom_{K[t]}^\tau(\mot, \tate)$ seems to have not been
discovered for arbitrary $t$-modules, yet.

\smallskip

In the special case of Drinfeld modules, the object $H$ is already widely in use.
Namely, in this case, $H\subseteq E(\tate)=\tate$ equals (by definition) the solution space ${\rm Sol}(\Delta)\subseteq \tate$ of the difference operator 
 $\Delta=(\theta-t)+a_1\tau+\ldots + a_r\tau^r\in K[t]\{\tau\}$
 where $\phi_t=\theta+a_1\tau+\ldots + a_r\tau^r\in K\{\tau\}$ describes the $t$-action on $E$, and
 Pellarin showed in \cite[\S 4.2]{fp:aiacnn} that the Anderson generating functions are elements of ${\rm Sol}(\Delta)$. El-Guindy and Papanikolas \cite[Rem.~6.3]{aeg-mp:iagfdm} deduced that they even generate ${\rm Sol}(\Delta)$.
 The isomorphism $\delta$ recovers this relation between the Anderson generating functions and the elements of the lattice (see Remark \ref{rem:delta-for-drinfeld-modules} and Remark \ref{rem:tau-difference-solutions}). Using these Anderson generating functions, Pellarin ibid. also provided a rigid analytic trivialization of its dual $t$-motive explicitly  which equals the isomorphism of Thm.~\ref{thm:H-isom-dual-m-tate} in this case.
The relation of $H={\rm Sol}(\Delta)$ to the $t$-adic Tate-module is investigated in 
\cite[Sect.~3.2]{cc-mp:aipldm} where they also describe the action of the absolute Galois group on $T_t(E)$ using the Anderson generating functions.
 
The submodule ${\rm Sol}(\Delta)=H$ is also used at other places, e.g.~for studying periods, quasi-periods and logarithms \cite{aeg-mp:iagfdm}, vectorial Drinfeld modular forms \cite{fp-rp:vdmfta}
 or Drinfeld modules over Tate algebras \cite[Def.~6.4]{ba-fp-ftr:apclsvta}.

\medskip

In the proof of our main theorem, a second step is how one can recover the higher residues which provide the coordinates of the periods as the values at $t=\theta$ of certain entries of this particular matrix $\rho_{[d-1]}(R^{-1})$.
This is the content of Section~\ref{sec:matrix-for-specialization}.

\subsection*{Acknowledgement}

I would like to thank Chieh-Yu Chang as well as the referee for helpful comments for improving the paper.
I would also like to thank Quentin Gazda for interesting discussions on certain aspects of the topic.

\section{Generalities}\label{sec:generalities}

\subsection{Base rings and operators}

Let $\FF_q$ be the finite field with $q$ elements and characteristic $p$, and $K$ a finite extension of the rational function field $\FF_q(\theta)$ in the variable $\theta$. We choose an extension to $K$ of the absolute value $\betr{\cdot }$ which is given on $\FF_q(\theta)$  by $\betr{\theta}=q$.
 Furthermore, $K_\infty\supseteq \FF_q\ls{\frac{1}{\theta}}$ denotes the completion of $K$ at this infinite place, and
$\CC_\infty$ the completion of an algebraic closure of $K_\infty$. Furthermore, let $\bar{K}$ be the algebraic closure of $K$ inside $\CC_\infty$.

All the commutative rings occuring will be subrings of the field of Laurent series $\laurent$, like
the polynomial rings $K[t]$ and $\bar{K}[t]$, the power series ring $\powser$ and the
Tate algebra $\tate$, i.e.~the algebra of series which are convergent for $\betr{t}\leq 1$. 

On $\laurent$ we have several operations which will induce operations on (most of) these subrings.

First at all, there is the twisting $\tau:\laurent\to \laurent$ given by 
\[  \tau(f) :=\sum_{i=i_0}^\infty (x_i)^qt^i \]
for $f=\sum_{i=i_0}^\infty x_it^i\in \laurent$, and the inverse twisting $\sigma:\laurent\to \laurent$ given by 
\[  \sigma(f) := \sum_{i=i_0}^\infty (x_i)^{1/q}t^i. \]

Furthermore, we have an action of the hyperdifferential operators with respect to~$t$, i.e.~the sequence of $\CC_\infty$-linear maps $(\hde{n})_{n\geq 0}$ given by
\[    \hd{n}{\sum_{i=i_0}^\infty x_it^i} =\sum_{i=i_0}^\infty \binom{i}{n} x_it^{i-n}, \]
where $\binom{i}{n}\in \FF_p\subset \FF_q$ is the residue of the usual binomial coefficient.
The image $\hd{n}{f}$ of some $f\in \laurent$ is called the $n$-th hyperderivative of $f$.

{The reader should be aware that the hyperderivatives we use here are different from those commonly used in this area of research (e.g.~in \cite{db:lidddm-I}, \cite{db:meagli}, \cite{db-ld:lidddm-II}), since the latter are obtained by using the hyperdifferential operators with respect to $\theta$ on $\FF_q(\theta)$ and its separable extensions.}

\medskip

While the twisting $\tau$ and the hyperdifferential operators restrict to endomorphisms on all subrings of $\laurent$ which occur in this paper, the inverse twisting $\sigma$ is only defined for perfect coefficient fields, in particular not on $K[t]$, but on $\bar{K}[t]$.
It is also obvious that the hyperdifferential operators commute with the twistings $\tau$ and~$\sigma$.

When we apply the twisting operators $\tau$ and $\sigma$ as well as the hyperdifferential operators to matrices it is meant that we apply them coefficient-wise.

\medskip

As the hyperdifferential operators commute with twisting, the operation of taking the $n$-th prolongation $\rho_{[n]}(\Theta)$ of square matrices $\Theta\in \Mat_{r\times r}(\laurent)$ (see Equation \eqref{eq:rho_n-matrix}) also commutes with twisting.
Furthermore, one can check that the $n$-th prolongation is a ring homomorphism
$$\rho_{[n]}:\Mat_{r\times r}(\laurent)\to \Mat_{r(n+1)\times r(n+1)}(\laurent)$$ (see~\cite{am-rp:iddbcppte} and \cite{am:ptmaip}).

\subsection{Basic objects and properties}\label{subsec:basic-objects}

We now define the basic objects and recall their main properties  used in this paper. For further details, we refer the reader to \cite{db-mp:ridmtt} or \cite{am:ptmaip}.

\medskip

$(E,\phi)$ denotes a $t$-module of dimension $d$ over the field $K$ with generic characteristic $\ell:\FF_q[t]\to K,t\mapsto \theta$. Hence, $E$ is isomorphic to $\GG_{a,K}^d$ as an algebraic group over $K$, and $\phi$ is a homomorphism
\[  \phi:\FF_q[t]\to \End_{\mathrm{grp},\FF_q}(E), a\mapsto \phi_a\]
into the group of $\FF_q$-linear homomorphisms of algebraic groups over $K$ (also called homomorphisms of
$\FF_q$-module schemes). 
The induced action on the Lie algebra $\Lie(E)$ will be denoted by
\[ \dphi:\FF_q[t]\to \End_K(\Lie(E)), \]
and by hypothesis on $t$-modules, $N:=\dphi_t-\theta$ is a nilpotent endomorphism on $\Lie(E)$.

Associated to $E$, one has the exponential map
\[ \e:\Lie(E)(\CC_\infty) \to E(\CC_\infty) \]
satisfying $\e(\dphi_a(x))=\phi_a(\e(x))$ for all $a\in \FF_q[t]$ and $x\in \Lie(E)(\CC_\infty)$.
The kernel of the exponential $\Ker(\e)$ is an $\FF_q[t]$-sub\-module of $\Lie(E)(\CC_\infty)$, and is called the period lattice $\Lambda_E:=\Ker(\e)$. Since, the $t$-module $E$ will be fixed throughout the paper, we will usually omit the subscript $E$ and simply write $\Lambda$ instead of $\Lambda_E$.

On $\Lie(E)(\CC_\infty)$ and on $E(\CC_\infty)$ we fix a norm $\norm{x}=\max \{ \betr{x_j}\, \mid j=1,\ldots,d\}$
where $x$ corresponds to $\transp{(x_1,\ldots, x_d)}$ via some  fixed choice of coordinates $E(\CC_\infty)\cong \GG_a^d(\CC_\infty)=\CC_\infty^{d}$, and induced isomorphism $\Lie(E)(\CC_\infty)\cong \CC_\infty^{d}$.

The exponential $\e$ is then a local isometry (cf.~\cite[Lemma 5.3]{uh-akj:pthshcff}). This means that there exists $\eps>0$ such that $\e$ restricts to a bijection
\[  \e:B_{\Lie(E)}(0,\eps) \to B_{E}(0,\eps) \]
satisfying $\norm{\e(x)}=\norm{x}$ for all $x\in B_{\Lie(E)}(0,\eps)$, where
\begin{eqnarray*}
B_{\Lie(E)}(0,\eps)  &=& \{ x\in \Lie(E)(\CC_\infty) \mid \norm{x}<\eps \},\\
B_{E}(0,\eps)  &=& \{ x\in E(\CC_\infty) \mid \norm{x}<\eps \}.
\end{eqnarray*}

For our main theorem, we will also assume in Section \ref{sec:matrix-for-specialization} that $E$ is abelian and uniformizable,  
but Sections \ref{sec:H} and \ref{sec:dual-t-motive} are valid for arbitrary $t$-modules $E$.

\smallskip

We emphasize a fact on $\Lambda$ which is usually stated for abelian $t$-modules.

\begin{defn}
We define the $t$-rank of $E$, $\trk(E)$ to be the dimension of the $t$-torsion 
$E[\phi_t]=\{ e\in E(\CC_\infty)\mid \phi_t(e)=0 \}$ as an $\FF_q$-vector space.
\end{defn}

\begin{prop}\label{prop:lambda-fin-gen}
For any $t$-module $E$, the following hold.
\begin{enumerate}
\item $\trk(E)\in \NN$ is well-defined, i.e.~$E[\phi_t]$ is a finite dimensional $\FF_q$-vector space.
\item The period lattice $\Lambda$ is discrete in $\Lie(E)(\CC_\infty)$, and it is a free $\FF_q[t]$-module of rank
not exceeding $\trk(E)$.
\end{enumerate}
\end{prop}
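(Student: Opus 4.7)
The plan is to handle the two assertions in turn, leaning on the nilpotency of $N=\dphi_t-\theta$ and the local isometry of $\e$. For (i), since $\dphi_t=\theta+N$ has all eigenvalues $\theta\neq 0$, it is invertible, and so $\phi_t\colon\GG_{a,K}^d\to\GG_{a,K}^d$ is étale at $0$ and, by translation on the group scheme, étale everywhere. Its image is then an open subgroup of the connected group $\GG_a^d$, so $\phi_t$ is surjective with $0$-dimensional kernel, which is therefore a finite étale $K$-group scheme; its $\CC_\infty$-points form a finite $\FF_q$-vector space of some dimension $\trk(E)$.

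For (ii), discreteness is immediate: if $\lambda\in\Lambda$ with $\norm{\lambda}<\eps$, then $\norm{\lambda}=\norm{\e(\lambda)}=0$. Torsion-freeness over $\FF_q[t]$ follows because for $0\neq a\in\FF_q[t]$, all eigenvalues of $\dphi_a=a(\theta+N)$ equal $a(\theta)\neq 0$ (using that $\theta$ is transcendental over $\FF_q$), so $\dphi_a$ is a bijection on $\Lie(E)(\CC_\infty)$. The rank bound is the crux: I would introduce the $\FF_q[t]$-linear map
\[
\psi\colon\Lambda\longrightarrow E[\phi_t](\CC_\infty),\qquad \psi(\lambda):=\e\bigl(\dphi_t^{-1}(\lambda)\bigr),
\]
well-defined since $\phi_t\circ\e\circ\dphi_t^{-1}=\e$ and $\e(\lambda)=0$, with kernel $\{\lambda:\dphi_t^{-1}(\lambda)\in\Lambda\}=\dphi_t(\Lambda)=t\cdot\Lambda$. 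This yields an $\FF_q$-linear injection $\Lambda/t\Lambda\hookrightarrow E[\phi_t](\CC_\infty)$, so $\dim_{\FF_q}(\Lambda/t\Lambda)\leq\trk(E)$.

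To promote this mod-$t$ bound to a bound on $\rank\Lambda$ and to establish freeness, I would rescale the chosen coordinates on $\Lie(E)(\CC_\infty)$ by suitable powers of $\theta$ so that $\dphi_t^{-1}=\theta^{-1}\sum_{k\geq 0}(-\theta^{-1}N)^k$ has operator norm $\leq q^{-1}$; this is a finite sum by nilpotency of $N$. A Nakayama-style descent then takes over: given $\FF_q[t]$-linearly independent $\lambda_1,\ldots,\lambda_s\in\Lambda$ whose images modulo $t\Lambda$ are $\FF_q$-dependent via $\sum c_i\lambda_i=\dphi_t(\mu)$, replacing an argmax $\lambda_j$ (with $c_j\neq 0$) by $\mu=\dphi_t^{-1}(\sum c_i\lambda_i)$ preserves $\FF_q[t]$-linear independence (since $t$ is a non-zero-divisor in $\FF_q[t]$) while strictly shrinking the multiset of norms. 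Combining the uniform factor $q^{-1}$ with the discreteness lower bound $\eps$ yields termination in finitely many steps, and the only possible termination is the reductions having become $\FF_q$-linearly independent, forcing $s\leq\trk(E)$. Running the same descent starting from lifts of an $\FF_q$-basis of $\Lambda/t\Lambda$ shows these lifts generate $\Lambda$ over $\FF_q[t]$, so $\Lambda$ is finitely generated and torsion-free over the PID $\FF_q[t]$, hence free of rank at most $\trk(E)$. The main obstacle is the rigorous termination of the descent: because the value group $q^\QQ$ is dense, strict decrease alone is insufficient, but the uniform operator-norm bound on $\dphi_t^{-1}$ together with the discreteness lower bound $\eps$ gives an explicit stopping time.
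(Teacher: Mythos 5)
Your part (i) and the discreteness part of (ii) coincide with the paper's arguments (\'etale covering for finiteness of $E[\phi_t]$; local isometry of $\e$ for discreteness). For the remaining claim the paper simply cites Anderson [Lemma 2.4.1] and observes that abelianness was not used there, so your attempt at a self-contained proof is a genuinely different route. The $\FF_q[t]$-linear map $\psi:\Lambda\to E[\phi_t]$, $\lambda\mapsto\e(\dphi_t^{-1}\lambda)$, with kernel $t\Lambda$ is correct, torsion-freeness via $a(\theta)\neq 0$ is fine, and the descent bounding the size of an $\FF_q[t]$-linearly independent subset of $\Lambda$ by $\trk(E)$ does terminate: after rescaling so that $\|\dphi_t^{-1}\|\leq q^{-1}$, the potential $\sum_i\log_q\|\lambda_i\|$ drops by at least $1$ per replacement (provided you replace the element of maximal norm \emph{among those with} $c_j\neq 0$, which is the one that controls $\|\mu\|$) and is bounded below by $s\log_q\eps$.

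The finite-generation/freeness step, however, has a genuine gap. Lifts $\lambda_1,\dots,\lambda_s$ of an $\FF_q$-basis of $\Lambda/t\Lambda$ already have $\FF_q$-independent reductions, so ``the same descent'' terminates immediately and proves nothing about generation. If you instead run the natural division algorithm $\lambda^{(n)}-\sum_i c_i^{(n)}\lambda_i=\dphi_t(\lambda^{(n+1)})$ on an arbitrary $\lambda\in\Lambda$, the only estimate available is $\|\lambda^{(n+1)}\|\leq q^{-1}\max\bigl(\|\lambda^{(n)}\|,\max_i\|\lambda_i\|\bigr)$, so $\lambda^{(n)}$ merely stabilizes inside a fixed bounded ball and need not vanish; the discreteness lower bound $\eps$ does not force termination here. (Nakayama is also unavailable, since it presupposes the finite generation one is trying to prove.) Closing this requires a further idea, e.g.\ that $\lambda\mapsto\e(\dphi_t^{-(n+1)}\lambda)$ injects $\{\lambda\in\Lambda:\|\lambda\|\leq q^n\eps\}$ into the finite set $E[\phi_{t^{n+1}}]$, so the $\lambda^{(n)}$ eventually cycle with uniformly bounded period and a single polynomial annihilates $\Lambda/\sum_i\FF_q[t]\lambda_i$. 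This extra ingredient is precisely what makes Anderson's lemma nontrivial, and it is not supplied by the rank descent alone.
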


\begin{proof}
\begin{enumerate}
\item $\phi_t:E\to E$ is a morphism of algebraic groups (even of $\FF_q$-module schemes) which induces an isomorphism on the Lie-algebra $\dphi_t:
\Lie(E)(K)\to \Lie(E)(K)$, since $\det(\dphi_t)=\theta^d\in K^\times$. Hence, $\phi_t$ is a finite \'etale covering of $\FF_q$-module schemes, and its kernel is a finite closed reduced $\FF_q$-module subscheme of $E$. Therefore,
its $\CC_\infty$-points $E[\phi_t]$ build an $\FF_q$-vector space of finite dimension.
\item Discreteness of $\Lambda$ follows from $\e$ being a local isometry. That $\Lambda$ is free of rank $\rank(\Lambda)\leq\trk(E)$ is exactly
what Anderson proved in \cite[Lemma 2.4.1]{ga:tm}, and abelianness of the $t$-module only came into play to relate $\trk(E)$ to the rank of the associated $t$-motive. \qedhere
\end{enumerate}
\end{proof}

\medskip

The $t$-motive associated to $E$ is the abelian group of $\FF_q$-linear homomorphisms of algebraic groups
over $K$ 
\[ \mot:=\mot(E)=\Hom_{\mathrm{grp},\FF_q}(E,\GG_{a,K}) \]
with $\FF_q[t]$-action via $a\cdot m:=m\circ \phi_a$ for all $m\in \mot$ and $a\in \FF_q[t]$,
and with $K\{\tau\}=\End_{\mathrm{grp},\FF_q}(\GG_{a,K})$-action via $\psi\cdot m:=\psi\circ m$ for all $m\in \mot$ and $\psi\in K\{\tau\}$. This is a $t$-motive in the terminology of \cite{db-mp:ridmtt} (see also \cite{am:ptmaip} for a precise definition).
When $E$ is abelian, then $\mot$ is finitely generated and free of some rank $r$ as a $K[t]$-module. By \cite[Prop.~1.8.3]{ga:tm} this rank equals the $t$-rank $\trk(E)$ that we defined above.
In the abelian case, we fix a $K[t]$-basis
$\{m_1,\ldots,m_r\}$ of $\mot$. With respect to this basis the $\tau$-action can be described as 
\[  \tau\svect{m}{r} = \Theta \svect{m}{r} \]
for some $\Theta\in \Mat_{r\times r}(K[t])$, and its determinant $\det(\Theta)$ equals $c(t-\theta)^s$ for some
$c\in K^\times$ and $s\geq 1$.

\subsection{Further objects}\label{subsec:further-objects}

We will need some more objects:
\begin{align*}
E\ps{t} &:= \left\{ \sum_{i=0}^\infty e_i t^i \bigmid e_i\in E(\CC_\infty)\right\}
\qquad \text{formal power series with coefficients in }E(\CC_\infty),\\
E[t] &:=  \left\{ \sum_{i\geq 0} e_i t^i\in E\ps{t} \bigmid e_i=0 \text{ for }i\gg 0\right\}
\cong E(\CC_\infty) \otimes_{\FF_q} \FF_q[t], 
\\
E\cs{t} &:= \left\{ \sum_{i\geq 0} e_i t^i \in E\ps{t}\bigmid \lim_{i\to \infty}\norm{e_i} =0 \right\}.
\end{align*}
Similarly, we define $\Lie(E)\ps{t}$, $\Lie(E)[t]$ and $\Lie(E)\cs{t}$, as well as $\Lambda\ps{t}$, $\Lambda[t]$ and $\Lambda\cs{t}$. However, since $\Lambda$ is discrete, $\Lambda\cs{t}=\Lambda[t]$.

\medskip

We should remark that on these objects, we have two different actions of $t$, namely one on the coefficients via $\phi_t$ or $\dphi_t$, respectively, and the other by raising the power of $t$ in the series. By abuse of notation, we will also
denote the action on the coefficients by $\phi_t$ and $\dphi_t$:
\[  \phi_t( \sum_{i\geq 0} e_i t^i ):=  \sum_{i\geq 0} \phi_t(e_i) t^i \quad\text{and}\quad \dphi_t( \sum_{i\geq 0} x_i t^i ):=  \sum_{i\geq 0} \dphi_t(x_i) t^i \]
for $\sum_{i\geq 0} e_i t^i \in E\ps{t}$ and $\sum_{i\geq 0} x_i t^i\in \Lie(E)\ps{t}$.

\begin{rem}
\begin{enumerate}
\item In the case that $E$ is a Drinfeld module, $E\cs{t}$ is a special case of a Drinfeld module 
over the Tate-algebra $\tate$ as defined in \cite{ba-fp-ftr:apclsvta}.
\item Although for different choices of coordinates $E(\CC_\infty)\cong \GG_a^d(\CC_\infty)=\CC_\infty^{d}$, the norms
$\norm{\cdot}$ might not be equivalent, the sets of sequences converging to $0$ are the same for each norm. Hence, the definitions of $E\cs{t}$ and $\Lie(E)\cs{t}$ are independent of the chosen coordinate system defining the norm.
\item Be aware that $E\ps{t} \supsetneq E(\CC_\infty)\otimes_{\FF_q} \FF_q\ps{t}$, since $E(\CC_\infty)$ is an infinite dimensional $\FF_q$-vector space.
\end{enumerate}
\end{rem}

\begin{lem}\label{lem:ses-exp-tate}
The exact sequence of $\FF_q[t]$-modules (via $\dphi$ and $\phi$, resp.)
\[0 \to \Lambda \to \Lie(E)(\CC_\infty) \xrightarrow{\e} E(\CC_\infty)\]
induces an exact sequence of $\FF_q[t]\otimes_{\FF_q} \FF_q[t]$-modules
\[ 0\to \Lambda[t]  \to \Lie(E)\cs{t} 
 \xrightarrow{\e\cs{t}} E\cs{t}, \]
by applying the homomorphisms coefficient-wise. 
\end{lem}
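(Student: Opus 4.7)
The plan is to verify three things: (i) that $\e\cs{t}$, defined coefficient-wise, really maps $\Lie(E)\cs{t}$ into $E\cs{t}$; (ii) that the sequence is $\FF_q[t]\otimes_{\FF_q}\FF_q[t]$-linear; and (iii) exactness at the two middle spots. Injectivity of $\Lambda[t]\hookrightarrow\Lie(E)\cs{t}$ is inherited from $\Lambda\hookrightarrow\Lie(E)(\CC_\infty)$ coefficient-by-coefficient, so only (i) and the kernel computation require actual work.

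For (i), I would use that $\e$ is a local isometry: there exists $\eps>0$ with $\norm{\e(x)}=\norm{x}$ whenever $\norm{x}<\eps$. Given $\sum_{i\geq 0} x_i t^i\in \Lie(E)\cs{t}$, by definition $\norm{x_i}\to 0$, so $\norm{x_i}<\eps$ for all $i$ sufficiently large; for those $i$ we then have $\norm{\e(x_i)}=\norm{x_i}$, and in particular $\norm{\e(x_i)}\to 0$. Thus the coefficient-wise map lands in $E\cs{t}$, and it is clearly $\CC_\infty$-linear in the naive sense of addition of series. The $\FF_q[t]\otimes_{\FF_q}\FF_q[t]$-module structure is almost immediate: equivariance with respect to the ``series variable'' $t$ (multiplication by $t$ shifting coefficients) is trivial because $\e\cs{t}$ acts coefficient-by-coefficient, while equivariance with respect to the ``coefficient'' $t$-action $\dphi_t$ on the left and $\phi_t$ on the right comes directly from the functional equation $\e\circ\dphi_a=\phi_a\circ\e$ applied termwise.

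For the kernel computation, suppose $\sum_{i\geq 0}x_i t^i\in\Lie(E)\cs{t}$ satisfies $\e\cs{t}\bigl(\sum_i x_i t^i\bigr)=\sum_i \e(x_i) t^i=0$. Then $\e(x_i)=0$ for every $i$, i.e.~$x_i\in\Lambda$ for all $i$. Combined with $\norm{x_i}\to 0$ and the discreteness of $\Lambda$ in $\Lie(E)(\CC_\infty)$ (Prop.~\ref{prop:lambda-fin-gen}), this forces $x_i=0$ for all sufficiently large $i$. Hence $\sum_i x_i t^i\in\Lambda[t]$, which gives one inclusion; the converse is obvious since every element of $\Lambda[t]$ is a finite $\FF_q$-linear combination of $\lambda t^i$ with $\lambda\in\Lambda$, each of which is killed coefficient-wise by $\e$.

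The only nontrivial inputs are thus the local-isometry estimate (needed for well-definedness) and the discreteness of $\Lambda$ (needed to collapse $\Lambda\cs{t}$ down to $\Lambda[t]$). I expect (i) to be the main technical point, but the estimate is literally one line once the local isometry from Section~\ref{subsec:basic-objects} is invoked, so no serious obstacle arises.
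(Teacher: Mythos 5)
Your proposal is correct and follows essentially the same route as the paper: apply $\e$ coefficient-wise, use the local isometry from Section~\ref{subsec:basic-objects} to see that the restriction of $\e\ps{t}$ maps $\Lie(E)\cs{t}$ into $E\cs{t}$, and use discreteness of $\Lambda$ to collapse $\Lambda\cs{t}$ to $\Lambda[t]$ for the kernel computation. If anything, your version spells out the kernel step a bit more carefully than the paper's condensed ``$\e\ps{t}(\sum x_i t^i)\in E\cs{t}$ iff $\sum x_i t^i\in\Lie(E)\cs{t}$'' (which, read literally, is false on $\Lambda\ps{t}$ and really stands for exactly the two observations you made); no changes needed.
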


\begin{proof}
When we apply the maps of the exponential sequence coefficient-wise, we obviously obtain an exact sequence
\begin{equation}\label{eq:exp-seq-over-ps}
 0\to \Lambda\ps{t}  \to \Lie(E)\ps{t} 
 \xrightarrow{\e\ps{t}} E\ps{t}.
\end{equation}
Since $\e$ is a local isometry,
$\e\ps{t}(\sum_{i\geq 0} x_it^i)$ is in $E\cs{t}$ if and only if $\sum_{i\geq 0} x_it^i\in \Lie(E)\cs{t}$.
Taking into account that $\Lambda\cs{t} =\Lambda[t]$, the exact sequence \eqref{eq:exp-seq-over-ps} restricts to
an exact sequence
\[ 0\to \Lambda[t]  \to \Lie(E)\cs{t} 
 \xrightarrow{\e\cs{t}} E\cs{t} \]
where $\e\cs{t}$ is just the restriction of $\e\ps{t}$ to ${\Lie(E)\cs{t}}$.
\end{proof}

\section{The subset $H$ and natural isomorphisms}\label{sec:H}

In this section, we don't assume any additional property (like abelianess or uniformizability) on the $t$-module $E$, since all constructions and theorems are  valid without further assumptions. The central object of this part is the following subset $H$ of $E\cs{t}$.

\subsection{Definition of $H$ and first isomorphism}

\begin{defn}\label{def:H-and-hat-H}
The subset $H_E$ of $E\cs{t}$, as well as the subset $\hat{H}_E$ of $E\ps{t}$, consist of those elements on which both $t$-actions coincide.
\begin{eqnarray*}
\hat{H}_E &:=&  \{ h=\sum_i e_i t^i\in E\ps{t} \mid \phi_t(h)=h\cdot t \}\\
H_E &:=& \{ h=\sum_i e_i t^i\in E\cs{t} \mid \phi_t(h)=h\cdot t \} =\hat{H}_E\cap E\cs{t}.
\end{eqnarray*}
Since, we work with a fixed $t$-module $E$ throughout the paper, we will always omit the subscript $E$,
and simply write $\hat{H}$ and $H$ for $\hat{H}_E$ and $H_E$, respectively.
\end{defn}

\begin{prop}\label{prop:hat-H-isom-tate-module}
\begin{enumerate}
\item\label{item:descr-of-hat-H} $h=\sum_i e_i t^i\in E\ps{t}$ is an element of $\hat{H}$, if and only if
$(e_i)_{i\geq 0}$ is a compatible system of $t^{i+1}$-torsion, i.e.~$e_i\in E[\phi_{t^{i+1}}]$ and $\phi_t(e_{i+1})=e_i$  for all $i\geq 0$.
\item\label{item:action-on-hat-H} $\hat{H}$ carries a natural $\FF_q\ps{t}$-action given by
\[  h\cdot f= \left(\sum_i e_i t^i\right)\cdot (\sum_j a_jt^j)= \sum_{n\geq 0} (\sum_{k=0}^n  a_k e_{n-k}) t^n \]
for $h=\sum_i e_i t^i\in \hat{H}$ and $f=\sum_j a_jt^j\in \FF_q\ps{t}$.
\item\label{item:hat-H-isom-tate-module} Via this action, $\hat{H}$ is isomorphic as $\FF_q\ps{t}$-module to the $t$-adic Tate-module 
$T_t(E)=\varprojlim_{i} E[\phi_{t^{i+1}}]$ of $E$ via $h=\sum_i e_i t^i\mapsto (e_i)_{i\geq 0}$.
\item\label{item:H-isom-submodule-of-tate-module} $H$ is isomorphic to the $\FF_q[t]$-submodule of the Tate-module $T_t(E)$ of those compatible systems $(e_i)_{i\geq 0}$ which tend to zero for $i\to \infty$.
\end{enumerate}
\end{prop}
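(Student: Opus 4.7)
Part (1) is a direct coefficient comparison in $E\ps{t}$. The plan is to expand $\phi_t(h) = \sum_i \phi_t(e_i) t^i$ on one side and $h\cdot t = \sum_{i\geq 1} e_{i-1} t^i$ on the other, then equate coefficient-by-coefficient to obtain $\phi_t(e_0)=0$ together with $\phi_t(e_{i+1})=e_i$ for all $i\geq 0$. Iterating the recursion gives $\phi_{t^{i+1}}(e_i)=\phi_{t^i}(e_{i-1})=\cdots=\phi_t(e_0)=0$, so $e_i\in E[\phi_{t^{i+1}}]$; conversely, any compatible system visibly produces an element of $\hat H$.

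For part (2), I would define the $\FF_q\ps{t}$-action via the Cauchy-product formula as in the statement. This is well-defined in $E\ps{t}$ because each coefficient is a finite $\FF_q$-linear combination of elements of the $\FF_q$-module $E(\CC_\infty)$, and the module axioms (associativity, distributivity, unit) are inherited from the polynomial case by the usual argument. The nontrivial point is closure of $\hat H$ under the action. Writing $g=h\cdot f$ with $g_n=\sum_{k=0}^n a_k e_{n-k}$ and applying the characterization of (1) gives
\[
\phi_t(g_n) = \sum_{k=0}^n a_k\,\phi_t(e_{n-k}) = \sum_{k=0}^{n-1} a_k\, e_{n-1-k} = g_{n-1},
\]
since $\phi_t(e_0)=0$ kills the $k=n$ term, so $g\in\hat H$ again by (1).

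Part (3) then follows immediately: the map $\sum e_i t^i \mapsto (e_i)_{i\geq 0}$ is bijective by (1), as elements of $\hat H$ are precisely compatible systems of torsion points, which is the very definition of $T_t(E)$. To see $\FF_q\ps{t}$-linearity, I would compare the $t$-actions on both sides. On $\hat H$, multiplication by $t$ shifts coefficients, producing the sequence $(0,e_0,e_1,\ldots)$; on $T_t(E)$, multiplication by $t$ is componentwise application of $\phi_t$, and the compatibility relation $\phi_t(e_{i+1})=e_i$ yields the same sequence. The matching for a general $f\in\FF_q\ps{t}$ either extends by continuity in the $t$-adic topology (both sides being $t$-adically separated and complete) or, equivalently, by a direct matching of the Cauchy product with the natural $\FF_q\ps{t}$-module structure on the projective limit.

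For part (4), $H=\hat H\cap E\cs{t}$ by definition, and the condition to lie in $E\cs{t}$ is precisely $\norm{e_i}\to 0$. Hence the isomorphism of (3) restricts to the asserted description of $H$. The only remaining point is that the null-sequence condition is preserved by the $\FF_q\ps{t}$-action, which is the ultrametric estimate $\norm{\sum_{k=0}^n a_k e_{n-k}} \leq \max_k \norm{e_{n-k}}$, using $\betr{a_k}\leq 1$. I do not foresee any serious obstacle here: the arguments are all direct, and the main care needed throughout is to keep the two $t$-actions straight (on coefficients via $\phi_t$ versus by raising the exponent of $t$) and to confirm they remain consistent at each step.
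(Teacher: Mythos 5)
Your proof of parts (1)--(3) is correct and takes the same route as the paper: coefficient comparison for (1), the Cauchy-product closure computation $\phi_t(g_n)=g_{n-1}$ for (2), and the tautological bijection plus matching of $t$-actions for (3). The paper itself compresses (2)--(4) to ``one easily checks by straightforward computation''; you spell it out, which is fine.

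However, there is a small but genuine misstatement in your part (4). You assert that ``the null-sequence condition is preserved by the $\FF_q\ps{t}$-action'' and cite the ultrametric estimate $\norm{\sum_{k=0}^n a_k e_{n-k}}\le\max_k\norm{e_{n-k}}$. That estimate, with $k$ ranging over $0,\ldots,n$, is always bounded below by $\norm{e_0}$ and proves nothing; and the claim itself is false. For $f=\sum_{j\ge 0}t^j\in\FF_q\ps{t}$ one gets $g_n=\sum_{j=0}^n e_j$, and if $\norm{e_j}\to 0$ fast enough for $\sum_j e_j$ to converge to a nonzero limit, then $\norm{g_n}$ does not tend to $0$. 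So $H=\hat H\cap E\cs{t}$ is \emph{not} an $\FF_q\ps{t}$-submodule of $\hat H$ in general, which is precisely why the proposition only asserts an $\FF_q[t]$-structure on $H$. Your argument does go through for $f\in\FF_q[t]$: then the Cauchy sum has only the finitely many indices $k\le\deg f$, each shift $n-k\to\infty$, and $\norm{g_n}\le\max_{0\le k\le\deg f}\norm{e_{n-k}}\to 0$. Replacing the $\FF_q\ps{t}$-claim with this $\FF_q[t]$-version fixes the argument without further changes.
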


\begin{proof}
By definition, $h=\sum_{i\geq 0} e_i t^i\in E\ps{t}$ is in $\hat{H}$ if and only if
$ \phi_t(h)=\sum_{i\geq 0} \phi_t(e_i) t^i$ equals $h\cdot t=\sum_{i\geq 0}e_i t^{i+1}$. Comparing coefficients
this is equivalent to $\phi_t(e_0)=0$, and $\phi_t(e_{i+1})=e_i$ for all $i\geq 0$, whence the claim in \eqref{item:descr-of-hat-H}. For verifying \eqref{item:action-on-hat-H}, we recognize that the given formula for $h\in E\ps{t}$ is the usual action of formal power series, and one easily checks by straightforward computation that $\hat{H}$ is stable under this action. Items \eqref{item:hat-H-isom-tate-module} and \eqref{item:H-isom-submodule-of-tate-module} are then just consequences of that.
\end{proof}

\begin{rem}
For Drinfeld modules, the submodule $H\subseteq E\cs{t}\cong \tate$ has already been considered and used at several places as the solution space of the difference operator $\Delta:=\phi_t-t\in K[t]\{\tau\}$, or as ``$(\theta-t)$-torsion'' (cf.~Introduction). In this case, also the connection of $H$ to the $t$-adic Tate-module has already been investigated in \cite[Sect.~3]{cc-mp:aipldm}, including the Galois representation on the Tate-module.
\end{rem}

\begin{prop}
Let $r:=\trk(E)$.
\begin{enumerate}
\item $\hat{H}$ is a free $\FF_q\ps{t}$-module of rank $r$.
\item $H$ is a free $\FF_q[t]$-module of rank less or equal to $r$. If $\{h_1,\ldots, h_s\}$ is a basis of $H$, then
$\{h_1,\ldots, h_s\}$ is part of an  $\FF_q\ps{t}$-basis of $\hat{H}$. In particular, they are  $\FF_q\ps{t}$-linearly independent.
\end{enumerate}
\end{prop}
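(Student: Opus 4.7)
The plan is to handle (1) via the isomorphism $\hat{H}\cong T_t(E)$ from Proposition~\ref{prop:hat-H-isom-tate-module} together with a structure-theorem argument on the torsion of $E$, and to handle (2) by constructing the natural map $\delta\colon\Lambda\to H$, showing it is an isomorphism of $\FF_q[t]$-modules, and then deducing the extension statement with Nakayama's lemma.

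For part (1), I would first observe, as in the proof of Proposition~\ref{prop:lambda-fin-gen}, that $\phi_t$ is a finite \'etale covering of degree $q^r$ (since $\det(\dphi_t)=\theta^d\in K^\times$), so every composition $\phi_{t^{n+1}}$ is finite \'etale of degree $q^{r(n+1)}$ and hence $|E[\phi_{t^{n+1}}]|=q^{r(n+1)}$. \'Etaleness also gives surjectivity of $\phi_t\colon E[\phi_{t^{n+1}}]\twoheadrightarrow E[\phi_{t^n}]$. Now $E[\phi_{t^{n+1}}]$ is an $\FF_q[t]$-module annihilated by $t^{n+1}$, of $\FF_q$-dimension $r(n+1)$, whose $t$-torsion equals $E[\phi_t]$ of $\FF_q$-dimension $r$; the structure theorem over the PID $\FF_q[t]$ then forces $E[\phi_{t^{n+1}}]\cong(\FF_q[t]/t^{n+1})^r$. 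Passing to the inverse limit along the above surjections yields $T_t(E)\cong\FF_q\ps{t}^{\,r}$, proving (1).

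For part (2), I would define $\delta\colon\Lambda\to H$ by $\delta(\lambda):=-\sum_{i\geq 0}\e(\dphi_t^{-i-1}\lambda)\,t^i$. This map lands in $H$: since $\dphi_t=\theta\cdot\id+N$ with $N$ nilpotent, $\dphi_t^{-1}$ exists and contracts norms asymptotically by $|\theta|^{-1}=q^{-1}$, so combined with the local isometry of $\e$ the coefficients tend to $0$; a direct computation verifies the compatibility $\phi_t(\e(\dphi_t^{-i-1}\lambda))=\e(\dphi_t^{-i}\lambda)$ together with the boundary case $\phi_t(\e(\dphi_t^{-1}\lambda))=\e(\lambda)=0$. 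The $\FF_q[t]$-linearity of $\delta$ is a short telescoping identity that uses only $\e(\lambda)=0$. Injectivity: if $\delta(\lambda)=0$ then $\dphi_t^{-i-1}\lambda\in\Lambda$ for every $i$, and these tend to $0$, so by discreteness of $\Lambda$ they all vanish, forcing $\lambda=0$. Surjectivity: given $h=\sum e_i t^i\in H$, pick $I$ so large that $\norm{e_i}<\eps$ for $i\geq I$ (with $\eps$ the local isometry radius of $\e$), lift $e_I$ uniquely to $y_I\in B_{\Lie(E)}(0,\eps)$, and set $\lambda:=\dphi_t^{I+1}(y_I)$; then $\e(\lambda)=\phi_t^{I+1}(e_I)=0$, so $\lambda\in\Lambda$, and expansion shows $\delta(-\lambda)=h$. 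Thus $\delta$ is an $\FF_q[t]$-isomorphism, and Proposition~\ref{prop:lambda-fin-gen}(ii) gives that $H$ is free of rank at most $r$.

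For the extension statement, the map $H\to\hat{H}/t\hat{H}\cong E[\phi_t]$, $h\mapsto e_0$, has kernel $tH$ (the shifted series $\sum_{i\geq 0}e_{i+1}t^i$ still satisfies both defining conditions of $H$), giving an injection $H/tH\hookrightarrow E[\phi_t]\cong\FF_q^{\,r}$. For a basis $\{h_1,\ldots,h_s\}$ of $H$ the images form an $\FF_q$-basis of $H/tH$ and are thus $\FF_q$-linearly independent in $E[\phi_t]$; extending to a full $\FF_q$-basis by $\bar{h}_{s+1},\ldots,\bar{h}_r$ of $E[\phi_t]$ and lifting arbitrarily to $h_{s+1},\ldots,h_r\in\hat{H}$, the family $h_1,\ldots,h_r$ reduces modulo $t$ to an $\FF_q$-basis of $\hat{H}/t\hat{H}$. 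Nakayama's lemma applied to the local ring $\FF_q\ps{t}$ (over which $\hat{H}$ is finitely generated and free, with maximal ideal $(t)$) then yields that $h_1,\ldots,h_r$ is an $\FF_q\ps{t}$-basis of $\hat{H}$, as desired. The main obstacle is the surjectivity of $\delta$: inverting the exponential on the tail of a sequence in $H$ via the local isometry, and then propagating along $\dphi_t^{-1}$ to recover a genuine period $\lambda\in\Lambda$; once this is in hand, everything else is mechanical.
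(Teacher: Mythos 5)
Your proof is correct and takes essentially the same route as the paper: étaleness of $\phi_t$ of degree $q^r$ plus an inverse-limit argument for (1), and the isomorphism $\Lambda\cong H$ together with Nakayama for (2). The only real difference is presentational — you prove the isomorphism $\delta\colon\Lambda\to H$ inline by a direct construction, a discreteness argument for injectivity, and the local-isometry argument for surjectivity, whereas the paper forward-references Theorem~\ref{thm:iso-delta}, which produces $\delta$ via the snake lemma and then gives the same surjectivity argument; your sign convention for $\delta$ differs from the paper's by $-1$, which is immaterial.
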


As we didn't find a proof for the finite rank of the Tate-module for arbitrary $t$-modules $E$, we give a proof here.

\begin{proof}
\begin{enumerate}
\item As we have seen in the proof of Prop.~\ref{prop:lambda-fin-gen}, $\phi_t:E\to E$ is a finite \'etale covering of degree $q^r$. Hence, for all $x\in E(\CC_\infty)$, the preimage $\phi^{-1}(\{x\})\subseteq E(\CC_\infty)$ has exactly $q^r$ elements. Therefore, for all $i\geq 0$, the $t^{i+1}$-torsion $E[\phi_{t^{i+1}}]$ is a free $\FF_q[t]/(t^{i+1})$-module of rank $r$. Passing to the limit, gives the desired conclusion.
\item In Theorem \ref{thm:iso-delta}, we will see that $H$ is isomorphic to the period lattice $\Lambda$. Hence, by Prop.~\ref{prop:lambda-fin-gen}, it is a free $\FF_q[t]$-module of rank $\rank(H)\leq \trk(E)$.

Let $h_1,\ldots, h_s$ be a basis of $H$. Since by Nakayama's lemma, a set $\{f_1,\ldots, f_r\}$ of elements of $\hat{H}$ is an $\FF_q\ps{t}$-basis of $\hat{H}$ if and only if its reductions modulo $t$ are an $\FF_q$-basis of
$\hat{H}/t\hat{H}=E[\phi_{t}]$, we only have to show that the constant terms $e_j:=h_j|_{t=0}\in E[\phi_t]$ are $\FF_q$-linearly independent.\\
For the contrary, assume that there is a non-trivial relation $\sum_{j=1}^s c_je_j=0$ with $c_1,\ldots, c_s\in \FF_q$. Then $\sum_{j=1}^s c_jh_j\in H\cap t\hat{H}=tH$. As $h_1,\ldots, h_s$ is a basis of $H$, there are $d_1,\ldots, d_s\in \FF_q[t]$ such that $\sum_{j=1}^s c_jh_j=t\cdot (\sum_{j=1}^s d_jh_j)$, i.e.~
\[  0=\sum_{j=1}^s c_jh_j - t\cdot (\sum_{j=1}^s d_jh_j)=\sum_{j=1}^s (c_j-td_j)h_j, \]
contradicting the assumption that $h_1,\ldots, h_s$ is linearly independent. \qedhere
\end{enumerate}
\end{proof}


\subsection{Second isomorphism $\Lambda\to H$}

The next step is to show that $H$ is isomorphic to the lattice $\Lambda$.

\begin{rem}\label{rem:lambda-isom-H-in-HJ}
Since, we already know that $H$ is isomorphic to the set of compatible systems $(e_i)_{i\geq 0}$ of $\phi_{t^i}$-torsion elements which tend to zero, the isomorphism $\Lambda\to H$ is also obtained as a special case of the canonical bijection given in \cite[Thm.~5.17]{uh-akj:pthshcff}. However, we will have a natural description of the homomorphism
$\Lambda\to H$ which already implies that it is a monomorphism, and only surjectivity is shown in the same lines
as in \cite{uh-akj:pthshcff}.
\end{rem}

The difference $\dphi_t-t$ of the two $t$-actions on $\Lie(E)\cs{t}$ is an isomorphism, since it is $\CC_\infty\cs{t}$-linear with determinant $(\theta-t)^d\in \CC_\infty\cs{t}^\times$.

Writing, $\phi_t-t$ for the difference of the two $t$-actions on $E\cs{t}$, we therefore have a commutative diagram of $\FF_q[t]\otimes_{\FF_q} \FF_q[t]$-modules with exact rows 

\centerline{
\xymatrix{
 0 \ar[r] & 0\ar[d] \ar[r] &  \Lie(E)\cs{t}  \ar[d]^{\e\cs{t}} \ar[r]^{\dphi_t-t} &  
 \Lie(E)\cs{t}\ar[d]^{\e\cs{t}}\ar[r]  & 0 \\
 0 \ar[r] & H  \ar[r]  & E\cs{t} \ar[r]^{\phi_t-t} & E\cs{t} .
}}

Inserting the kernels of the vertical maps (cf.~Lemma \ref{lem:ses-exp-tate}) and cokernel of the first vertical map, we get an exact  sequence by the snake lemma.

\centerline{
\xymatrix{
 & 0 \ar[r] \ar@{-->}[d]  &  \Lambda[t]  \ar[d]  \ar[r]^{\dphi_t-t} &  \Lambda[t] \ar[d] 
    \ar@{-->}'[r] `r[d]`[d]+/d 3ex/ `^d[lll]+/l 3ex/ `[dddll]      [dddll]
 & \\
 0 \ar[r] & 0\ar[d] \ar[r] &  \Lie(E)\cs{t}  \ar[d]^{\e\cs{t}} \ar[r]^{\dphi_t-t} &  
 \Lie(E)\cs{t}\ar[d]^{\e\cs{t}}\ar[r]  & 0 \\
 0 \ar[r] & H \ar@{=}[d] \ar[r]  & E\cs{t} \ar[r]^{\phi_t-t} & E\cs{t} &\\
 & H &&&
}}

\begin{thm}\label{thm:iso-delta}
The homomorphism  of $\FF_q[t]\otimes_{\FF_q} \FF_q[t]$-modules $\Lambda[t]\to H$ given by the snake lemma
induces an isomorphism of $\FF_q[t]$-modules 
\[ \delta:\Lambda\to H, \lambda\mapsto \e\cs{t}\left( (\dphi_t-t)^{-1}(\lambda)\right)
=\sum_{i\geq 0} \e\left(\dphi_t^{-i-1}(\lambda)\right)\cdot t^i.\]
\end{thm}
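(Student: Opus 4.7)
The plan is to first unwind the snake lemma map, then verify the closed formula, and finally verify bijectivity by hand. The preliminary computation is this: given $\lambda\in\Lambda$, I look for $x=\sum_{i\geq 0}x_it^i\in\Lie(E)\cs{t}$ with $(\dphi_t-t)(x)=\lambda$; comparing coefficients of $t^i$ forces $x_0=\dphi_t^{-1}(\lambda)$ and $x_{i+1}=\dphi_t^{-1}(x_i)$, so $x=\sum_{i\geq 0}\dphi_t^{-i-1}(\lambda)\,t^i$. Because $\dphi_t=\theta\cdot\mathrm{id}+N$ with $N$ nilpotent on $\Lie(E)(\CC_\infty)$, the operator norm of $\dphi_t^{-1}$ is $|\theta|^{-1}=q^{-1}$, hence $\norm{\dphi_t^{-i-1}(\lambda)}\to 0$ and $x$ indeed lies in $\Lie(E)\cs{t}$. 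Applying $\e\cs{t}$ gives exactly the claimed formula for $\delta(\lambda)$, and the snake lemma guarantees $\delta$ is a homomorphism of $\FF_q[t]$-modules (the two $t$-actions on $\Lambda$ coming from the top row agree, namely the $\dphi$-action, since in $\Lambda[t]$ multiplying by $t$ via coefficients equals multiplying via $\dphi_t$ after the snake map by construction).

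I should double-check that $\delta(\lambda)\in H$ directly: $\phi_t(\e\cs{t}(x))=\e\cs{t}(\dphi_t(x))=\e\cs{t}(\lambda+tx)=\e\cs{t}(tx)=t\cdot\e\cs{t}(x)$, using $\e(\lambda)=0$. For injectivity, assume $\delta(\lambda)=0$, i.e.\ $\e(\dphi_t^{-i-1}(\lambda))=0$ for every $i\geq 0$; then $\dphi_t^{-i-1}(\lambda)\in\Lambda$ for all $i$, but by the convergence estimate above these elements tend to $0$, and $\Lambda$ is discrete (Prop.~\ref{prop:lambda-fin-gen}), so some $\dphi_t^{-i-1}(\lambda)$ vanishes and, by invertibility of $\dphi_t$, $\lambda=0$.

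The main obstacle is surjectivity, and here is how I would handle it. Take $h=\sum_{i\geq 0}e_it^i\in H$; by Prop.~\ref{prop:hat-H-isom-tate-module}\eqref{item:descr-of-hat-H} the $e_i$ form a compatible tower with $e_i\in E[\phi_{t^{i+1}}]$, $\phi_t(e_{i+1})=e_i$, and $\norm{e_i}\to 0$. Since $\e$ is a local isometry, pick $N$ with $\norm{e_i}<\eps$ for all $i\geq N$ and lift each such $e_i$ uniquely to $x_i\in B_{\Lie(E)}(0,\eps)$ with $\e(x_i)=e_i$. The relation $\e(\dphi_t(x_{i+1}))=\phi_t(e_{i+1})=e_i=\e(x_i)$ shows $\dphi_t(x_{i+1})-x_i\in\Lambda$; but for $i\geq N$ both terms have norm $<\eps$ (the norm of $\dphi_t$ is $|\theta|=q$, but one can arrange $N$ large enough that $\norm{\dphi_t(x_{i+1})}=q\cdot\norm{x_{i+1}}<\eps$ too, using $\norm{x_{i+1}}=\norm{e_{i+1}}\to 0$), so their difference sits in $\Lambda\cap B_{\Lie(E)}(0,\eps)=\{0\}$ by discreteness. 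Hence $\dphi_t(x_{i+1})=x_i$ for all $i\geq N$.

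Define $\lambda:=\dphi_t^{N+1}(x_N)$; this is independent of the choice of $N$ in the stable range, and $\e(\lambda)=\phi_{t^{N+1}}(e_N)=0$ shows $\lambda\in\Lambda$. By construction $\dphi_t^{-i-1}(\lambda)=x_i$ for $i\geq N$, so $\e(\dphi_t^{-i-1}(\lambda))=e_i$ there. For $i<N$ I propagate downwards: using $\e\circ\dphi_t=\phi_t\circ\e$ and $\phi_t(e_{i+1})=e_i$, an immediate induction gives $\e(\dphi_t^{-i-1}(\lambda))=e_i$ for every $i\geq 0$. Therefore $\delta(\lambda)=h$, completing surjectivity and the proof.
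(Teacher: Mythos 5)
Your proof is correct and the surjectivity argument is essentially the same as the paper's. Two small remarks on precision, and one comparative note, are worth making.

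First, the claim that ``the operator norm of $\dphi_t^{-1}$ is $|\theta|^{-1}=q^{-1}$'' (and the parallel claim that $\norm{\dphi_t(x)}=q\cdot\norm{x}$) is not correct in general once $N\neq 0$: in fixed coordinates $\dphi_t^{-1}=\theta^{-1}\sum_{k=0}^{d-1}(-N/\theta)^k$, and if $N$ has entries of large absolute value the operator norm of $\dphi_t^{-1}$ can exceed $q^{-1}$. What is true, and suffices for your argument, is that $\norm{\dphi_t^{-i-1}}\leq C q^{-i-1}$ for a constant $C$ depending on $N$ (because the binomial coefficients in $(1+N/\theta)^{-i-1}=\sum_{k<d}\binom{-i-1}{k}(N/\theta)^k$ lie in $\FF_p$ and hence have absolute value at most $1$), so $\norm{\dphi_t^{-i-1}(\lambda)}\to 0$ regardless. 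Likewise, in the surjectivity step, what you need is continuity of $\dphi_t$, i.e.\ the existence of $\eps_0<\eps$ with $\dphi_t\bigl(B_{\Lie(E)}(0,\eps_0)\bigr)\subseteq B_{\Lie(E)}(0,\eps)$, which is exactly how the paper phrases it; the exact equality $\norm{\dphi_t(x)}=q\norm{x}$ is not available but also not needed. Note that the paper sidesteps the convergence question entirely by observing up front that $\dphi_t-t$ is $\CC_\infty\cs{t}$-linear on $\Lie(E)\cs{t}$ with unit determinant $(\theta-t)^d$, so $(\dphi_t-t)^{-1}(\lambda)$ lands in $\Lie(E)\cs{t}$ automatically; the geometric series is then read off formally.

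Second, you reprove injectivity by hand (discreteness of $\Lambda$ plus $\norm{\dphi_t^{-i-1}(\lambda)}\to 0$), which is fine, but in the paper's setup injectivity is a free consequence of the snake lemma applied to the diagram whose left vertical map $0\to H$ has trivial kernel: the resulting exact sequence $0\to 0\to\Lambda[t]\xrightarrow{\dphi_t-t}\Lambda[t]\to H$ already delivers an injection $\Lambda=\Coker(\dphi_t-t|_{\Lambda[t]})\hookrightarrow H$. Your direct check of $\delta(\lambda)\in H$ is similarly redundant given the diagram, but it is a reasonable sanity check. None of this affects correctness; the core of the theorem is surjectivity, and there your argument matches the paper's.
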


\begin{proof}
Recall that by definition, the left and right $\FF_q[t]$-actions on $H$ coincide. Hence, we are free to use the left or right $\FF_q[t]$-action from $E\cs{t}$ whichever is more suitable to the task.  
The cokernel of $\dphi_t-t:\Lambda[t]\to \Lambda[t]$ is isomorphic to $\Lambda=\Lambda\cdot t^0$ with $t$-action via $\dphi_t$. Hence, we get an induced injective homomorphism of $\FF_q[t]$-modules $\Lambda\to H$.
Following the arrows in the diagram, we see that indeed $\delta$ is given by 
\[ \delta(\lambda)=\e\cs{t}\left( (\dphi_t-t)^{-1}(\lambda)\right). \]
The second formula for $\delta(\lambda)$ is then obtained by recognizing that $(\dphi_t-t)^{-1}(\lambda)$
is given by the geometric series $\sum_{i\geq 0} \dphi_t^{-i-1}(\lambda)\cdot t^i\in \Lie(E)\ps{t}$.
It remains to show that $\delta$ is surjective.

First remark, that there exists $0<\eps_0<\eps$ such that
\[ \dphi_t\bigl(B_{\Lie(E)}(0,\eps_0)\bigr)\subseteq B_{\Lie(E)}(0,\eps), \]
since $\dphi_t:\Lie(E)(\CC_\infty)\to\Lie(E)(\CC_\infty)$ is $\CC_\infty$-linear, and hence continuous.

Now, let $h=\sum_i e_i t^i\in H$. By definition $\lim_{i \to \infty}\norm{e_i}=0$, hence
there is $n\in \NN$ such that $e_i\in B_E(0,\eps_0)\subseteq E(\CC_\infty)$ for all $i\geq n$. Therefore, for any $i\geq n$ there is a unique $\lambda_i\in B_{\Lie(E)}(0,\eps_0)\subseteq \Lie(E)(\CC_\infty)$ such that $\e(\lambda_i)=e_i$,
and we define $\lambda:=\dphi_{t^{i+1}}(\lambda_i)\in  \Lie(E)(\CC_\infty)$. This definition is independent of the chosen $i\geq n$, since 
\[ \e\left( \dphi_{t}(\lambda_{i+1})-\lambda_i\right) = \phi_t(\e(\lambda_{i+1}))-\e(\lambda_i)
=\phi_t(e_{i+1})-e_i=0, \]
by Prop.~\ref{prop:hat-H-isom-tate-module}\eqref{item:descr-of-hat-H},
i.e.~$\dphi_{t}(\lambda_{i+1})-\lambda_i\in \Lambda\cap B_{\Lie(E)}(0,\eps)=\{0\}$.

Furthermore, $\lambda$ is indeed in $\Lambda$, since
\[ \e(\lambda)=\e\left(\dphi_{t^{i+1}}(\lambda_i)\right)=\phi_{t^{i+1}}(e_{i})=0.\]

Finally, we have
\begin{eqnarray*}
 \delta (\lambda) &=& \sum_{i\geq 0} \e\left(\dphi_t^{-i-1}(\lambda)\right)\cdot t^i \\
&=& \sum_{0\leq i< n} \e\left(\dphi_t^{n-i}(\lambda_{n})\right)\cdot t^i
+\sum_{i\geq n}  \e\left( \lambda_i\right) \cdot t^i \\
&=&  \sum_{0\leq i< n} \phi_t^{n-i}(e_{n}) t^i
+ \sum_{i\geq n} e_i t^i \\
&=&  \sum_{0\leq i< n} e_{i} t^i + \sum_{i\geq n} e_i  t^i =h. 
\end{eqnarray*}
\end{proof}

\begin{rem}\label{rem:delta-for-drinfeld-modules}
In the case of $E$ being a Drinfeld module, $\dphi_t$ is just multiplication by~$\theta$.
Hence,
 the isomorphism $\delta:\Lambda\to
H\subseteq \tate$ above associates to a period $\lambda\in \Lambda$ its Anderson generating function 
\[ g_{\phi}(\lambda;t)=
\sum_{i\geq 0} \e\left(\frac{\lambda}{\theta^{i+1}}\right)\cdot t^i\]
as given for example in \cite{fp:aiacnn} or \cite{aeg-mp:iagfdm}.
The fact that one recovers $\lambda$ from $g_{\phi}(\lambda;t)$ via
\[  \lambda=-\res(g_{\phi}(\lambda;t)\, dt) \]
is also valid in the general setting, as the following proposition shows.
\end{rem}

\begin{prop}\label{prop:inverse-of-delta-explicit}
The inverse of the isomorphism
\[\delta:\Lambda\to H, \lambda\mapsto \e\cs{t}\left( (\dphi_t-t)^{-1}(\lambda)\right)\]
is explicitly given by sending $h\in H$ to the residue at $t=\theta$ of $-h\, dt$; considered coordinate-wise with respect to some coordinate system of $E$, and corresponding coordinate system of $\Lie(E)$,
\[ -\res: H\to \Lambda, h\mapsto -\res(h\, dt) .
\]
\end{prop}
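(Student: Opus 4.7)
The plan is to verify the identity $(-\res)\circ \delta = \id_\Lambda$; since $\delta$ is already known to be an isomorphism by Theorem~\ref{thm:iso-delta}, this identifies $-\res$ with $\delta^{-1}$. Starting from the formula $\delta(\lambda) = \e\cs{t}\bigl((\dphi_t-t)^{-1}(\lambda)\bigr)$ established there, I would first compute $y := (\dphi_t - t)^{-1}(\lambda) \in \Lie(E)\cs{t}$ in closed form. Writing $\dphi_t = \theta\cdot\id + N$ with $N$ nilpotent of index at most $d$, the geometric-series expansion of $((\theta-t)I + N)^{-1}$ terminates and gives
\[
 y \,=\, -\sum_{j=0}^{d-1} (t-\theta)^{-j-1}\,N^j(\lambda),
\]
a rational function whose sole pole is at $t=\theta$; its $(t-\theta)^{-1}$-coefficient is $-\lambda$, so $\res(y\,dt) = -\lambda$.

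Next, fix coordinates on $E$ and $\Lie(E)$ so that the exponential has a $\tau$-power-series expansion $\e = \sum_{k\geq 0} E_k\tau^k$ with $E_k \in \Mat_d(\CC_\infty)$ and $E_0 = I$. Since $\e\cs{t}$ is defined by applying $\e$ to each $t$-coefficient, I obtain
\[
 h \,:=\, \delta(\lambda) \,=\, y + \sum_{k \geq 1} E_k\,y^{(k)},
\]
where $y^{(k)}$ is the Frobenius twist obtained by applying $\tau^k$ coefficient-wise in the $t$-expansion. Because $y$ is itself rational, its twists are again rational, with $\theta$ replaced by $\theta^{q^k}$:
\[
 y^{(k)} \,=\, -\sum_{j=0}^{d-1} (t-\theta^{q^k})^{-j-1}\,(N^j(\lambda))^{(k)}.
\]
For every $k\geq 1$ this rational function has its only pole at $\theta^{q^k}\neq\theta$, so is analytic at $t=\theta$ and contributes nothing to the residue there.

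What remains is to justify that $h$, which \emph{a priori} is only an element of $\tate$, admits a meromorphic continuation to a disk around $t=\theta$ for which the identity above holds, and that residues may be taken termwise. The bound $|E_k|^{1/q^k}\to 0$ (from $\e$ being entire) combined with $|t-\theta^{q^k}| = |\theta|^{q^k}$ for $t$ near $\theta$ forces
\[
 |E_k\,y^{(k)}(t)| \,\leq\, |E_k|\cdot C^{q^k}\,|\theta|^{-q^k} \,\longrightarrow\, 0
\]
for a suitable constant $C = C(\lambda,N)$, so $\sum_{k\geq 1} E_k y^{(k)}$ converges uniformly on some small disk about $\theta$ to an analytic function, which furnishes the desired meromorphic extension of $h - y$. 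Since this extension agrees with $h - y$ as power series at $t=0$ (where both are defined), by uniqueness the two sides coincide wherever both are analytic, and residues may be computed termwise, giving $\res(h\,dt) = \res(y\,dt) = -\lambda$, i.e.\ $\lambda = -\res(\delta(\lambda)\,dt)$. The principal obstacle is precisely this convergence/analytic-continuation bookkeeping---reconciling the Tate-algebra viewpoint ($h$ a priori convergent only on $|t|\leq 1$) with the meromorphic-near-$\theta$ viewpoint that residues require---for which the Frobenius-twist decomposition of the rational function $y$ is the crucial engine.
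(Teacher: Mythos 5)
Your proposal is correct and follows essentially the same route as the paper's proof: both decompose $\delta(\lambda)$ via $\e = \id + \sum_{j\geq 1}E_j\tau^j$, compute the untwisted term $y = (\dphi_t-t)^{-1}(\lambda) = -\sum_{k=0}^{d-1}(t-\theta)^{-k-1}N^k(\lambda)$ and read off residue $-\lambda$, and observe that each twist $y^{(j)}$ has its pole shifted to $\theta^{q^j}\neq\theta$ hence is regular at $\theta$. You are somewhat more explicit than the paper about the convergence/analytic-continuation bookkeeping justifying the termwise residue, which the paper handles implicitly by rewriting everything as an identity in $\End(\CC_\infty^d)[\![t-\theta]\!]$ and interchanging the two summations.
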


\begin{proof}
For the proof, we fix a coordinate system $E(\CC_\infty)\cong \GG_a^d(\CC_\infty)\cong \CC_\infty^d$ and corresponding coordinate system $\Lie(E)(\CC_\infty)\cong \CC_\infty^d$, and compute in these coordinates without mentioning it explicitly.
By definition of a $t$-module, $\dphi_t=\theta+N$ for a $\CC_\infty$-linear nilpotent operator $N$ on $\Lie(E)(\CC_\infty)$. Since, $\dim_{\CC_\infty}(\Lie(E)(\CC_\infty))=d$, we have $N^d=0$. Hence in 
$\End_{\tate}(\Lie(E)\cs{t})\cong \End_{\tate}(\tate^d)$, we obtain
\[ (\dphi_t-t)^{-1}=(\theta+N-t)^{-1}=-(t-\theta)^{-1}(1-(t-\theta)^{-1}N)^{-1}
=- \sum_{k=0}^{d-1} (t-\theta)^{-k-1}N^k. \]
Further denote by $N^{(j)}=\tau^j(N)$ the operator with twisted coefficients, then for $j\geq 1$:
\begin{eqnarray*}
\left( (\dphi_t-t)^{-1}\right)^{(j)} &=& (\theta^{q^j}+N^{(j)}-t)^{-1}\\
&=& (\theta^{q^j}+N^{(j)}-\theta)^{-1}\cdot \left( 1-(t-\theta)(\theta^{q^j}+N^{(j)}-\theta)^{-1}\right)^{-1} \\
&=& \sum_{k=0}^\infty (\theta^{q^j}+N^{(j)}-\theta)^{-k-1}(t-\theta)^k\quad \in \End(\CC_\infty^d)[[t-\theta]]
\end{eqnarray*}

Let $\e(x)=\sum_{j=0}^\infty e_{\phi,j}\tau^j(x)=x+\sum_{j=1}^\infty e_{\phi,j}\tau^j(x)$, then
\begin{eqnarray*}
 && \e\cs{t}\bigl( (\dphi_t-t)^{-1}(\lambda)\bigr)
=  (\dphi_t-t)^{-1}(\lambda) +  \sum_{j=1}^\infty e_{\phi,j} \left( (\dphi_t-t)^{-1}\right)^{(j)}(\lambda^{q^j}) \\
&=& - \sum_{k=0}^{d-1} (t-\theta)^{-k-1}N^k(\lambda)+
\sum_{j=1}^\infty e_{\phi,j} \sum_{k=0}^\infty (\theta^{q^j}+N^{(j)}-\theta)^{-k-1}(\lambda^{q^j})(t-\theta)^k\\
&=&  - \sum_{k=0}^{d-1} (t-\theta)^{-k-1}N^k(\lambda)+
 \sum_{k=0}^\infty \sum_{j=1}^\infty e_{\phi,j}  (\theta^{q^j}+N^{(j)}-\theta)^{-k-1}(\lambda^{q^j})\cdot (t-\theta)^k 
\end{eqnarray*}
Hence, the coefficient of $(t-\theta)^{-1}$ is $-N^0(\lambda)=-\lambda$.
\end{proof}

\subsection{Third isomorphism $H\to  \Hom_{K[t]}^\tau( \mot(E), \tate)$}

\medskip

In the following we show that $H$ is also isomorphic to $\Hom_{K[t]}^\tau( \mot, \tate)$, where as defined earlier $\mot=\Hom_{\mathrm{grp},\FF_q}(E,\GG_{a,K}) $ is the $t$-motive associated to~$E$.

The natural homomorphism of $\FF_q$-vector spaces
\begin{equation}\label{eq:iso-E-to-M-dual}
     E(\CC_\infty) \longrightarrow \Hom_{K}^{\tau}(\mot, \CC_\infty), e\mapsto
\left\{ \mu_e: m\mapsto m(e) \right\}
\end{equation}
is an isomorphism, since after a choice of coordinate system $E(\CC_\infty)\cong \CC_\infty^d$ the latter is isomorphic to the bidual vector space 
$$E(\CC_\infty)^{\vee\vee}
=\Hom_{\CC_\infty}(\Hom_{\CC_\infty}(E(\CC_\infty),\CC_\infty),\CC_\infty)$$ of $E(\CC_\infty)$.
The homomorphism \eqref{eq:iso-E-to-M-dual} is even compatible with the $t$-action on $E$ via $\phi_t$ and the $t$-action on 
$\mu\in \Hom_{K}^{\tau}(\mot, \CC_\infty)$ via $(t\cdot \mu)(m)=
\mu(m\circ \phi_t)$ for all $m\in \mot$.

\begin{thm}\label{thm:h-isom-M-tate-dual}
There is a natural isomorphism of $\FF_q\ps{t}$-modules
\[  \hat{H} \longrightarrow \Hom_{K[t]}^\tau( \mot, \powser) \]
which restricts to an isomorphism of $\FF_q[t]$-modules
\[ \iota: H\longrightarrow \Hom_{K[t]}^\tau( \mot, \tate). \]
\end{thm}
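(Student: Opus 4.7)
The plan is to write the isomorphism down explicitly and then verify its properties, reducing everything to the already-established duality isomorphism~\eqref{eq:iso-E-to-M-dual} applied coefficient-by-coefficient.

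First I would define the candidate map: for $h=\sum_{i\geq 0}e_it^i\in \hat{H}$, set
\[
\iota(h):\mot\longrightarrow \powser,\qquad \iota(h)(m):=\sum_{i\geq 0}m(e_i)\,t^i.
\]
Since each $m\in\mot$ is an $\FF_q$-linear morphism $E\to\GG_{a,K}$ and $e_i\in E(\CC_\infty)$, the expression $m(e_i)\in\CC_\infty$ makes sense and $\iota(h)(m)\in\powser$ needs no convergence hypothesis. The $K$-linearity of $\iota(h)$ is immediate from $K$-linearity of each~$m$. For $t$-linearity, I would compute
\[
\iota(h)(t\cdot m)=\sum_{i\geq 0}m(\phi_t(e_i))\,t^i,\qquad t\cdot\iota(h)(m)=\sum_{i\geq 0}m(e_i)\,t^{i+1},
\]
and observe that equality of these two series is exactly the characterization $\phi_t(e_0)=0$, $\phi_t(e_{i+1})=e_i$ of $\hat{H}$ given in Prop.~\ref{prop:hat-H-isom-tate-module}\eqref{item:descr-of-hat-H}. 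The $\tau$-equivariance follows from $\tau\cdot m=\tau\circ m$ and the entrywise definition of $\tau$ on $\powser$, which give $\iota(h)(\tau\cdot m)=\sum_i m(e_i)^q t^i=\tau(\iota(h)(m))$. $\FF_q\ps{t}$-linearity of $h\mapsto\iota(h)$ is straightforward.

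Next I would check injectivity and surjectivity by working one coefficient at a time. If $\iota(h)=0$, then $m(e_i)=0$ for every $m\in\mot$ and every $i$, whence $e_i=0$ by the isomorphism~\eqref{eq:iso-E-to-M-dual}, so $h=0$. For surjectivity, given $\mu\in\Hom_{K[t]}^\tau(\mot,\powser)$, write $\mu(m)=\sum_{i\geq 0}\mu_i(m)\,t^i$ with $\mu_i:\mot\to\CC_\infty$ obtained by reading off coefficients. Each $\mu_i$ is $K$-linear and $\tau$-equivariant (the $\tau$-action on $\powser$ acts entrywise), so by~\eqref{eq:iso-E-to-M-dual} there exists a unique $e_i\in E(\CC_\infty)$ with $\mu_i(m)=m(e_i)$ for all $m$. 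The $t$-linearity $\mu(t\cdot m)=t\cdot\mu(m)$ translates, via the same coefficient comparison as above, to $m(\phi_t(e_0))=0$ and $m(\phi_t(e_{i+1}))=m(e_i)$ for all $m\in\mot$; applying~\eqref{eq:iso-E-to-M-dual} again yields $\phi_t(e_0)=0$ and $\phi_t(e_{i+1})=e_i$, so $h:=\sum_{i\geq 0}e_it^i$ lies in $\hat{H}$ and satisfies $\iota(h)=\mu$.

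Finally, I would restrict to $H$. The easy direction: if $\|e_i\|\to 0$, then for any $m\in\mot$, which after a choice of coordinates is a $K$-linear combination of monomials in the $\tau^j(\pi_k)$ where $\pi_1,\ldots,\pi_d$ are the coordinate projections, the estimates $|m(e_i)|\leq C\max_k|\pi_k(e_i)|^{q^{N}}$ for some constants give $m(e_i)\to 0$, so $\iota(h)(m)\in\tate$. For the converse I would use exactly that the coordinate projections $\pi_1,\ldots,\pi_d:E\to\GG_{a,K}$ are themselves elements of~$\mot$ (they are $K$-linear and hence $\FF_q$-linear morphisms of algebraic groups): if $\iota(h)(\pi_k)\in\tate$ for every $k$, then $\pi_k(e_i)\to 0$ for every $k$, so $\|e_i\|=\max_k|\pi_k(e_i)|\to 0$ and thus $h\in H$.

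The main obstacle I anticipate is purely bookkeeping in this last equivalence: one must be careful that ``$\iota(h)\in\Hom_{K[t]}^\tau(\mot,\tate)$'' really means $\iota(h)(m)\in\tate$ for \emph{every} $m$, and that conversely the finite family of coordinate projections already suffices to detect the norm condition defining~$H$. Apart from this, the proof is a formal consequence of~\eqref{eq:iso-E-to-M-dual} and Prop.~\ref{prop:hat-H-isom-tate-module}.
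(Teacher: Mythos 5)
Your proposal is correct and takes essentially the same route as the paper: both build $\iota$ from the bidual isomorphism~\eqref{eq:iso-E-to-M-dual} applied coefficient-wise, identify $K[t]$-linearity of the image with the defining recursion $\phi_t(e_0)=0$, $\phi_t(e_{i+1})=e_i$ of $\hat{H}$, and then restrict to $H$ by observing that $\norm{e_i}\to 0$ if and only if $m(e_i)\to 0$ for all $m\in\mot$ (with the coordinate projections giving the converse). The only difference is organizational: the paper packages the bijection as a restriction of the induced isomorphism $E\ps{t}\to\Hom_K^\tau(\mot,\CC_\infty\ps{t})$, whereas you verify well-definedness, injectivity, and surjectivity of $\iota$ directly, which amounts to the same thing.
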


\begin{proof}
The isomorphism \eqref{eq:iso-E-to-M-dual} induces an
isomorphism of $\FF_q[t]\otimes_{\FF_q} \FF_q\ps{t}$-modules
\begin{eqnarray*}
 E\ps{t} &\longrightarrow & \Hom_{K}^{\tau}(\mot, \CC_\infty)\ps{t}=\Hom_{K}^{\tau}(\mot, \CC_\infty\ps{t})\\
 \sum_i e_i t^i &\mapsto & \left\{
 \sum_i \mu_{e_i} t^i  : m\mapsto \sum_i m(e_i)t^i \right\},
\end{eqnarray*}
By compatibility with the $t$-actions, the image of $\hat{H}\subseteq  E\ps{t}$ are exactly those
homomorphisms $\mu:\mot\to  \CC_\infty\ps{t}$ for which $\mu(m\circ \phi_t)=\mu(m)\cdot t$ for all $m\in \mot$, i.e.~the $K[t]$-linear ones, inducing the isomorphism
\[  \hat{H} \longrightarrow \Hom_{K[t]}^\tau( \mot, \powser). \]
Furthermore, one has $\lim\limits_{i\to \infty} \norm{e_i}=0$ if and only if for all $m\in \mot$,
$\lim\limits_{i\to \infty} \betr{m(e_i)}=0$. Hence, the given isomorphism restricts to an isomorphism
\[ \iota: H\longrightarrow \Hom_{K[t]}^\tau( \mot, \tate). \qedhere \]
\end{proof}

\begin{rem}\label{rem:tau-difference-solutions}
Assume for the moment that $E$ is abelian, i.e. that $\mot$ is a free finitely generated $K[t]$-module. Let $\{\mu_1,\ldots,\mu_r\}$ be the basis of $\Hom_{K[t]}( \mot,K[t])$
which is dual to the chosen basis $\{m_1\ldots, m_r\}$ of $\mot$.
Then for $x_1,\ldots,x_r\in \tate$, the homomorphism $\mu=\sum_{i=1}^r x_i\mu_i\in \Hom_{K[t]}( \mot,\tate)$ is $\tau$-equivariant if and only if for all $j=1,\ldots, r$,
\[ \tau(\mu(m_j))=\tau(x_j)\]
equals
\[ \mu(\tau(m_j))=\mu\biggl( \Biggl(\Theta\svect{m}{r}\Biggr)_j\biggr)=\Biggl( \Theta \begin{pmatrix}
\mu(m_1) \\ \vdots \\ \mu(m_r) \end{pmatrix}\Biggr)_j =\Biggl( \Theta \svect{x}{r}\Biggr)_j .\]
Hence, the composition $\iota\circ \delta$ gives an $\FF_q[t]$-isomorphism between
the lattice $\Lambda$ and the solutions of the $\tau$-difference equation
\[   \tau\Bigl( \svect{x}{r}\Bigr) =  \Theta \svect{x}{r}.\]

In the case of Drinfeld modules, by choosing the basis $m_1=1,\ldots, m_r=\tau^{r-1}$, these solutions are just the
vectors $\transp{(g, \tau(g),\ldots, \tau^{r-1}(g))}$ where as in Remark \ref{rem:delta-for-drinfeld-modules}, $g=g_{\phi}(\lambda;t)$ is the Anderson generating function associated to $\lambda\in \Lambda$.
\end{rem}

\section{The dual $t$-motive and the forth isomorphism}\label{sec:dual-t-motive}

In this section, $E$ is still an arbitrary $t$-module.\\
We consider the dual $t$-motive $\dumot=\Hom_{\mathrm{grp},\FF_q}(\GG_{a,\CC_\infty},E_{\CC_\infty})$ over $\CC_\infty$ associated to the $t$-module $E$ with $\FF_q[t]$-action via $a\cdot m:=\phi_a\circ m$ for all $m\in \dumot$ and $a\in \FF_q[t]$,
and with $\CC_\infty\{\sigma\}$-action via $\psi\cdot m:=m\circ \psi^*$ for all $m\in \dumot$ and $\psi\in \CC_\infty\{\sigma\}$ where for $\psi=\sum_i a_i\sigma^i\in \CC_\infty\{\sigma\}$, one defines 
$\psi^*=\sum_i \tau^i a_i=\sum_i a_i^{q^{i}}\tau^i\in \CC_\infty\{\tau\}=\End_{\mathrm{grp},\FF_q}(\GG_{a,\CC_\infty})$.

The aim of this section is to establish an isomorphism $H\to (\dumot \otimes_{\CC_\infty[t]} \tate)^\sigma$.

\begin{rem}\label{rem:dual-motive-to-H-in-HJ}
Since, we already know that $H$ is isomorphic to compatible systems of $\phi_{t^i}$-torsion points in $E(\CC_\infty)$, the isomorphism $H\to (\dumot \otimes_{\CC_\infty[t]} \tate)^\sigma$ that we will obtain in 
Theorem \ref{thm:H-isom-dual-m-tate} below, is just a special
case of the canonical bijection given in  \cite[Thm.~5.18]{uh-akj:pthshcff}.

However, in \cite[Thm.~5.18]{uh-akj:pthshcff}, they assume that $\dumot$ is finitely generated as $\CC_\infty[t]$-module which we don't, and our approach gives a natural description of this isomorphism.

In the case of a Drinfeld module $E$, a construction of a basis of $(\dumot \otimes_{\CC_\infty[t]} \tate)^\sigma$
via the basis of $H$ consisting of Anderson generating functions is given in \cite[\S 4.2]{fp:aiacnn}. 
\end{rem}

The starting point for getting the desired isomorphism is the sequence of $\FF_q[t]$-modules
\begin{equation}\label{eq:ses-dual-motive}
   0\to \dumot \xrightarrow{\sigma-\id} \dumot \xrightarrow{\ev} E(\CC_\infty) \to 0.
\end{equation}
where $\ev$ is defined by $\ev(x)=x(1)$ for all $x\in \dumot=\Hom_{\mathrm{grp},\FF_q}(\GG_{a,\CC_\infty},E_{\CC_\infty})$.
This sequence is exact (cf.~e.g.~\cite[Prop.~5.6]{uh-akj:pthshcff}), and we sketch the proof of the exactness here, since we will need to refer to it later:\\
First at all, fix an isomorphism $E\cong \GG_a^d$, and let $\kappa_1,\ldots, \kappa_d:E\to \GG_a$ be the
corresponding coordinate functions. Further, let $(\check{\kappa}_1,\check{\kappa}_2,\ldots,\check{\kappa}_d)$ be the $\CC_\infty\{\sigma\}$-basis of $\dumot$ dual to $(\kappa_1,\ldots, \kappa_d)$, i.e.~
$\kappa_j\circ \check{\kappa}_i=0$ for $i\neq j$ and $\kappa_j\circ \check{\kappa}_j=\id_{\GG_a}$ for all $j=1,\ldots, d$. Be aware that also $\sum_{j=1}^d \check{\kappa}_j\circ \kappa_j=\id_E$.

Given an element $0\neq x=\sum_{j=1}^d (\sum_{l=0}^{n_j} a_{j,l}\sigma^l) \check{\kappa}_j\in \dumot$ with $a_{j,l}\in \CC_\infty$, then
\[  (\sigma-\id)(x)=\sum_{j=1}^d (\sum_{l=0}^{n_j} (a_{j,l})^{1/q}\sigma^{l+1}-a_{j,l}\sigma^l) \check{\kappa}_j 
\neq 0.   \]
Hence, $\sigma-\id$ is injective. Furthermore,
\[ \ev\bigl( (\sigma-\id)(x) \bigr)=\ev\bigl( x\circ \tau -x\bigr)=x(\tau(1))-x(1)=0.\]
Therefore, the composition is zero. On the other hand, if
$y=\sum_{j=1}^d (\sum_{l=0}^{n_j} b_{j,l}\sigma^l) \check{\kappa}_j\in \dumot$ such that $\ev(y)=0$,
then for all $j=1,\ldots,d$: $\sum_{l=0}^{n_j} \tau^l b_{j,l}(1)=0$, i.e. 
\[\sum_{l=0}^{n_j} b_{j,l}^{q^l}=0,\]
and one easily checks that the element
\[ x=\sum_{j=1}^d \left( \sum_{l=0}^{n_j} \bigl(-b_{j,l}-b_{j,l-1}^{q^{-1}}-\ldots - b_{j,1}^{q^{-l+1}}- b_{j,0}^{q^{-l}} \bigr)\sigma^l  \right)  \check{\kappa}_j\in \dumot \]
is a preimage of $y$ under $\sigma-\id$. Hence, the sequence is exact in the middle.\\
For showing that $\ev$ is surjective, we just have to recognize that for any $e\in E(\CC_\infty)$, a preimage under
$\ev$ is given by $x=\sum_{j=1}^d  \kappa_j(e)\cdot \check{\kappa}_j$, since
\[  \ev\left(  \sum_{j=1}^d  \kappa_j(e)\cdot \check{\kappa}_j\right)
= \sum_{j=1}^d \check{\kappa}_j\bigl( \kappa_j(e)\cdot 1\bigr)= \left( \sum_{j=1}^d \check{\kappa}_j\circ \kappa_j\right) (e)=e.\]

\medskip

\begin{prop}
The sequence \eqref{eq:ses-dual-motive} induces a short exact sequence
\begin{equation}\label{eq:ses-dual-motive-tate}
   0\to \dumot\otimes_{\CC_\infty} \tate \xrightarrow{\sigma\otimes \sigma-\id} \dumot\otimes_{\CC_\infty} \tate \xrightarrow{\ev\cs{t}} E\cs{t} \to 0
\end{equation}
where $\ev\cs{t}\left(x\otimes (\sum_i f_it^i)\right):=\sum_i \ev(f_ix) t^i$.
\end{prop}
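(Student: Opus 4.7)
My plan is to transfer the explicit constructions from the proof of exactness of the sequence~\eqref{eq:ses-dual-motive} to the tensor product. Using the $\CC_\infty$-basis $\{\sigma^l\check{\kappa}_j\mid l\geq 0,\ 1\leq j\leq d\}$ of $\dumot$, every element $X\in \dumot\otimes_{\CC_\infty}\tate$ has a unique representation $X=\sum_{j,l}(\sigma^l\check{\kappa}_j)\otimes f_{j,l}$ with $f_{j,l}\in\tate$ and only finitely many of them nonzero. First I would check that $\ev\cs{t}$ is well-defined on the tensor product (bilinearity over $\CC_\infty$ is immediate from the formula) and lands in $E\cs{t}$: the coefficient of $t^i$ in $\ev\cs{t}(\sigma^l\check{\kappa}_j\otimes f)$ equals $f_i^{q^l}\check{\kappa}_j(1)$, which tends to $0$ as $i\to\infty$ whenever $f\in\tate$.

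For injectivity of $\sigma\otimes\sigma-\id$, the map $\sigma\otimes\sigma$ sends $\sigma^l\check{\kappa}_j\otimes f$ to $\sigma^{l+1}\check{\kappa}_j\otimes\sigma(f)$, so equating coefficients with respect to the basis yields $f_{j,0}=0$ and $\sigma(f_{j,l-1})=f_{j,l}$ for $l\geq 1$, which forces all $f_{j,l}=0$. Vanishing of the composition $\ev\cs{t}\circ(\sigma\otimes\sigma-\id)$ follows from the identity $\sigma(f)_i^{q^{l+1}}=f_i^{q^l}$ applied coefficientwise. For surjectivity of $\ev\cs{t}$, I would mimic the section used at the end of the proof of~\eqref{eq:ses-dual-motive}: given $e=\sum_i e_i t^i\in E\cs{t}$, set $X:=\sum_{j=1}^d\check{\kappa}_j\otimes\bigl(\sum_i\kappa_j(e_i)t^i\bigr)$, which makes sense in $\dumot\otimes_{\CC_\infty}\tate$ because $e_i\to 0$ in $E(\CC_\infty)$ implies $\kappa_j(e_i)\to 0$ in $\CC_\infty$, and then the identity $\sum_j\check{\kappa}_j\circ\kappa_j=\id_E$ gives $\ev\cs{t}(X)=e$.

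The main obstacle I anticipate is exactness in the middle, where the natural preimage formula must be shown to produce an element of the algebraic tensor product, i.e.~to have finite support in the basis. Given $X=\sum_{j,\,l\leq n_j}(\sigma^l\check{\kappa}_j)\otimes f_{j,l}$ with $\ev\cs{t}(X)=0$, the vanishing condition translates, coefficient by coefficient, into $\sum_{l=0}^{n_j}f_{j,l,i}^{q^l}=0$ in $\CC_\infty$ for every pair $(i,j)$, where $f_{j,l}=\sum_i f_{j,l,i}t^i$. Guided by the explicit preimage formula in the proof of~\eqref{eq:ses-dual-motive}, I would set
\[ Y_{j,l}:=-\sum_{k=0}^l\sigma^{l-k}(f_{j,k})\in\tate \quad\text{(i.e.\ }y_{j,l,i}=-\sum_{k=0}^l f_{j,k,i}^{1/q^{l-k}}\text{)} \]
and $Y:=\sum_{j,l}(\sigma^l\check{\kappa}_j)\otimes Y_{j,l}$. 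For $l\geq n_j$ one computes $(y_{j,l,i})^{q^l}=-\sum_{k=0}^{n_j}f_{j,k,i}^{q^k}=0$; since Frobenius is injective on the field $\CC_\infty$ of characteristic $p$, this forces $y_{j,l,i}=0$ and hence $Y_{j,l}=0$ for $l\geq n_j$, so $Y$ indeed lies in $\dumot\otimes_{\CC_\infty}\tate$. The relation $(\sigma\otimes\sigma-\id)(Y)=X$ then reduces to the telescoping identity $\sigma(Y_{j,l-1})-Y_{j,l}=f_{j,l}$ for $l\geq 1$ and $-Y_{j,0}=f_{j,0}$ for $l=0$, both of which follow directly from the definition of the $Y_{j,l}$.
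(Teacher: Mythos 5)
Your proof is correct and follows essentially the same route as the paper's: the same explicit section $\sum_j\check\kappa_j\otimes\bigl(\sum_i\kappa_j(e_i)t^i\bigr)$ for surjectivity, and the same preimage formula (your $Y_{j,l}=-\sum_{k=0}^l\sigma^{l-k}(f_{j,k})$ is exactly the paper's $g_{jli}$) for exactness in the middle. The one place you add a small polish is the finiteness step: the paper truncates the preimage sum at $l=n$ and verifies Tate-algebra membership via a direct norm estimate $\lim_i\betr{g_{jli}}\leq\max_\nu\lim_i\betr{f_{j\nu i}}^{q^{\nu-l}}=0$, leaving implicit that the top coefficient $\sigma(G_{j,n})$ must vanish for the formula to close; your observation that $(y_{j,l,i})^{q^l}=-\sum_{k}f_{j,k,i}^{q^k}=0$ forces $Y_{j,l}=0$ for all $l\geq n_j$ by injectivity of Frobenius makes this explicit, and your membership $Y_{j,l}\in\tate$ comes for free since $\sigma$ preserves $\tate$. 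Both are valid; yours is a touch cleaner on that point.
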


\begin{proof}
By applying the maps in  the sequence \eqref{eq:ses-dual-motive} coefficient-wise, we obviously obtain a short exact sequence of formal power series
\[ 0\to \dumot\ps{t}  \xrightarrow{(\sigma-\id)\ps{t}} \dumot\ps{t} \xrightarrow{\ev\ps{t}} E\ps{t}\to 0,\]
 and the objects in the sequence \eqref{eq:ses-dual-motive-tate} can be seen
as $\CC_\infty$-subspace, and its maps are just the restrictions of the maps $(\sigma-\id)\ps{t}$ and $\ev\ps{t}$.
Therefore, $\sigma\otimes \sigma-\id$ is injective, and the composition $\ev\cs{t}\circ (\sigma\otimes \sigma-\id)$ is zero. The map  $\ev\cs{t}$ is surjective, since similar to the computation above, we see that for
$\sum_{i\geq 0} e_it^i\in E\cs{t}$, a preimage under $\ev\cs{t}$ is
given by $\sum_{j=1}^d \check{\kappa}_j\otimes \left( \sum_i \kappa_j(e_i)t^i \right)\in \dumot\otimes_{\CC_\infty} \tate$.
For showing exactness in the middle, we recognize that every element $y\in \dumot\otimes_{\CC_\infty} \tate$ can be written in the form $y=\sum_{j=1}^d \sum_{l=0}^n \sigma^l\check{\kappa}_j\otimes \left(\sum_i f_{jli}t^i\right)$
for some $n$ and $\sum_i f_{jli}t^i\in \tate$. If $\ev\cs{t}(y)=0$, then as above, $\sum_{l=0}^{n} f_{jli}^{q^l}=0$
for all $j=1,\ldots, d$ and all $i$, and one obtains a preimage under $\sigma\otimes \sigma-\id$
as $x=\sum_{j=1}^d \sum_{l=0}^n \sigma^l\check{\kappa}_j\otimes \left(\sum_i g_{jli}t^i\right)\in \dumot\otimes_{\CC_\infty} \powser$ where
\[ g_{jli}:=-f_{jli}-f_{j,l-1,i}^{q^{-1}}-\ldots - f_{j0i}^{q^{-l}}. \]
Since for all $j=1,\ldots, d$ and $l=0,\ldots, n$
\[ \lim_{i\to \infty} \betr{g_{jli}}\leq  \lim_{i\to \infty}  \max_{0\leq \nu\leq l} \{ \betr{f_{j\nu i}^{q^{\nu-l}}} \}
=\max_{0\leq \nu\leq l} \{ \lim_{i\to \infty} \betr{f_{j\nu i}}^{q^{\nu-l}} \}=0,\]
the series $\sum_i g_{jli}t^i$ are indeed in the Tate algebra $\tate$.
\end{proof}

Defining the operator $\phi_t-t$ on $\dumot\otimes_{\CC_\infty} \tate$  to be $\phi_t\otimes \id-\id\otimes t$,
 we obtain a commutative diagram of $\FF_q[t]\otimes_{\FF_q} \FF_q[t]$-modules with exact rows
 
 \centerline{
\xymatrix@C+6mm{
 0 \ar[r] & \dumot\otimes_{\CC_\infty} \tate \ar[d]^{\phi_t-t} \ar[r]^{\sigma\otimes \sigma-\id} 
 & \dumot\otimes_{\CC_\infty} \tate  \ar[d]^{\phi_t-t} \ar[r]^(.65){\ev\cs{t}} &  
 E\cs{t}\ar[d]^{\phi_t-t}\ar[r]  & 0 \\
 0 \ar[r] & \dumot\otimes_{\CC_\infty} \tate \ar[r]^{\sigma\otimes \sigma-\id}  
 & \dumot\otimes_{\CC_\infty} \tate \ar[r]^(.65){\ev\cs{t}}   & E\cs{t}\ar[r]  & 0 .
}}

\begin{thm}\label{thm:H-isom-dual-m-tate} \
\begin{enumerate}
\item The homomorphism $\phi_t-t:\dumot\otimes_{\CC_\infty} \tate\to \dumot\otimes_{\CC_\infty} \tate$ is injective.
\item $\Coker(\phi_t-t)=\dumot\otimes_{\CC_\infty[t]} \tate$.
\item Using the snake lemma, the diagram induces an isomorphism of $\FF_q[t]$-modules
\[ H\longrightarrow \bigl( \dumot\otimes_{\CC_\infty[t]} \tate \bigr)^\sigma,\]
where $()^\sigma$ denotes the $\sigma$-invariant elements.
\end{enumerate}
\end{thm}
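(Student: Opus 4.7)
The approach is to apply the snake lemma to the commutative diagram displayed immediately before the theorem, whose rows are the short exact sequence \eqref{eq:ses-dual-motive-tate} and whose vertical maps are $\phi_t - t$. Once (1) and (2) are established, the snake sequence reduces to
\[ 0 \longrightarrow H \xrightarrow{\partial} \dumot \otimes_{\CC_\infty[t]} \tate \xrightarrow{\overline{\sigma \otimes \sigma - \id}} \dumot \otimes_{\CC_\infty[t]} \tate \longrightarrow \Coker\bigl((\phi_t - t)|_{E\cs{t}}\bigr) \longrightarrow 0, \]
and (3) reduces to identifying the kernel of the middle map with the $\sigma$-invariants.

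For (1), using the $\CC_\infty\{\sigma\}$-basis $\check{\kappa}_1,\ldots,\check{\kappa}_d$ of $\dumot$, every $y \in \dumot \otimes_{\CC_\infty} \tate$ has a unique presentation $y = \sum_{j=1}^d \sum_{l=0}^{n} \sigma^l\check{\kappa}_j \otimes g_{jl}$ with $g_{jl} = \sum_i g_{jli} t^i \in \tate$. Collecting powers of $t$ rewrites this as $y = \sum_{i \ge 0} y_i t^i$ with each $y_i := \sum_{j,l} g_{jli}\, \sigma^l\check{\kappa}_j$ a genuine element of $\dumot$ (the $\sigma$-degree is uniformly bounded in $i$). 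The equation $(\phi_t \otimes \id - \id \otimes t)(y) = 0$, after comparing coefficients of $t^i$, becomes $\phi_t(y_0) = 0$ and $\phi_t(y_{i+1}) = y_i$ for $i \ge 0$, so by induction it suffices to check that $\phi_t$ is injective on $\dumot$. But any $\FF_q$-linear $m: \GG_{a,\CC_\infty} \to E_{\CC_\infty}$ with $\phi_t \circ m = 0$ factors through the kernel $E[\phi_t]$, a finite étale $\FF_q$-module scheme (Proposition \ref{prop:lambda-fin-gen}), and since $\GG_{a,\CC_\infty}$ is connected $m$ must vanish.

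Claim (2) is a definitional unpacking: the image of $\phi_t \otimes \id - \id \otimes t$ is generated by the elements $\phi_t(m) \otimes f - m \otimes tf$, and quotienting out by these relations on top of the ambient $\CC_\infty$-bilinearity is precisely the universal property of $\dumot \otimes_{\CC_\infty[t]} \tate$, where $\CC_\infty[t]$ acts on $\dumot$ via $\phi_t$ and on $\tate$ via multiplication by $t$.

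With (1) and (2) in place, the snake lemma yields the displayed exact sequence, and I only need to verify that $\sigma \otimes \sigma$ descends to a well-defined endomorphism of the $\CC_\infty[t]$-tensor product, so that the induced map is $\overline{\sigma - \id}$ with kernel $(\dumot \otimes_{\CC_\infty[t]} \tate)^\sigma$. Commutation of $\sigma$ and $\phi_t$ inside $\CC_\infty\{\sigma\}$ provides the $t$-balancing; for the $\CC_\infty$-balancing I would compute $\sigma(c \cdot m) = (m \circ c) \circ \tau = m \circ \tau \circ c^{1/q} = c^{1/q} \cdot \sigma(m)$ on $\dumot$, and note the matching identity $\sigma(c f) = c^{1/q}\sigma(f)$ on $\tate$, so the two $\sigma$-semilinearities agree and the descent goes through. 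The only step that requires genuine care rather than being purely formal is precisely this compatibility of the two semilinear structures; the rest is a direct unpacking of definitions and a routine snake-lemma argument.
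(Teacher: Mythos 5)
Your proof is correct and follows essentially the same route as the paper: compare coefficients of $t$-powers to reduce injectivity of $\phi_t-t$ to injectivity of $\phi_t$ on $\dumot$ (via the connectedness of $\GG_a$), identify the cokernel with the $\CC_\infty[t]$-tensor product, and apply the snake lemma. Your use of the $\CC_\infty\{\sigma\}$-basis to make the coefficient comparison clean, and your explicit check of the semilinear compatibility needed for $\sigma$ to descend to $\dumot\otimes_{\CC_\infty[t]}\tate$, are useful refinements of detail rather than a genuinely different argument.
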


\begin{proof}
\begin{enumerate}
\item First consider the $t$-action on $\dumot$ via $\phi_t$. If for an element $x\in \dumot=\Hom_{\mathrm{grp},\FF_q}(\GG_{a,\CC_\infty},E_{\CC_\infty})$ we
have $\phi_t\circ x=0$, then the image of $x$ has to be in the kernel of $\phi_t:E\to E$. However, the kernel of $\phi_t$ is finite, and the image of $x$ is connected. Hence, the composition $\phi_t\circ x$ can only be zero, if 
the image of $x$ is trivial, i.e. if $x=0$. Hence, $\phi_t$ is injective on $\dumot$.

Assume that $\sum_{j=1}^l x_j\otimes (\sum_{i\geq 0} f_{j,i}t^i)\in \dumot\otimes_{\CC_\infty} \tate$ is in the kernel of $\phi_t-t$, hence
\[ \sum_{j=1}^l (\phi_t\circ x_j)\otimes (\sum_{i\geq 0} f_{j,i}t^i )
= \sum_{j=1}^l x_j\otimes (\sum_{i\geq 0} f_{j,i}t^{i+1}). \]
By comparing the coefficients of the various $t$-powers, we therefore obtain
\[  \sum_{j=1}^l \phi_t\circ x_j\cdot f_{j,0}=0\quad\text{and}\quad \sum_{j=1}^l \phi_t\circ x_j\cdot f_{j,i}=\sum_{j=1}^l x_j\cdot f_{j,i-1}\quad \text{ for all }i\geq 1.\]
Then inductively, we obtain that $\sum_{j=1}^l  x_j\cdot f_{j,i}=0$ for $i\geq 0$, since 
$\phi_t$ is injective on $\dumot$.
\item The cokernel is just the quotient on which both $t$-actions agree. Hence, 
\[\Coker(\phi_t-t)=\dumot\otimes_{\CC_\infty[t]} \tate. \]
\item Adding kernels and cokernels to the diagram above, we obtain

\centerline{
\xymatrix@C+4mm{
 & 0 \ar[d] &  0  \ar[d] \ar[r] &  H \ar[d] 
    \ar@{-->}'[r] `r[d]`[d]+/d 3ex/ `^d[lll]+/l 3ex/ `[dddll]      [dddll] 
 & \\
 0 \ar[r] & \dumot\otimes_{\CC_\infty} \tate \ar[d]^{\phi_t-t} \ar[r]^{\sigma\otimes \sigma-\id} 
 &  \dumot\otimes_{\CC_\infty} \tate  \ar[d]^{\phi_t-t} \ar[r]^(.65){\ev\cs{t}} &  
 E\cs{t}\ar[d]^{\phi_t-t}\ar[r]  & 0 \\
 0 \ar[r] & \dumot\otimes_{\CC_\infty} \tate \ar[r]^{\sigma\otimes \sigma-\id} \ar[d] & \dumot\otimes_{\CC_\infty} \tate \ar[r]^(.65){\ev\cs{t}} \ar[d]  & E\cs{t}\ar[r]  & 0\\
& \dumot\otimes_{\CC_\infty[t]} \tate \ar[r]^{\sigma-\id} & \dumot\otimes_{\CC_\infty[t]} \tate & &
}}
As the snake sequence is exact, we obtain the desired isomorphism
\[  H\longrightarrow \Ker(\sigma-\id)= \bigl(\dumot\otimes_{\CC_\infty[t]} \tate\bigr)^\sigma.  \]
\end{enumerate}
\end{proof}

\begin{rem}\label{rem:iso-dual-motive-to-H}
By starting with the commuting diagram with exact rows 

\centerline{
\xymatrix@C+3mm{
 0 \ar[r] & \dumot\otimes_{\CC_\infty} \tate \ar[r]^{\phi_t-t} \ar[d]^{\sigma\otimes \sigma-\id} 
 &  \dumot\otimes_{\CC_\infty} \tate  \ar[r]  \ar[d]^{\sigma\otimes \sigma-\id} &  \dumot\otimes_{\CC_\infty[t]} \tate \ar[d]^{\sigma-\id}\ar[r] 
 & 0 \\
 0 \ar[r] & \dumot\otimes_{\CC_\infty} \tate \ar[r]^{\phi_t-t}  & \dumot\otimes_{\CC_\infty} \tate \ar[r]   & \dumot\otimes_{\CC_\infty[t]} \tate \ar[r] & 0 
}}
and applying the snake lemma, one obtains the inverse isomorphism
\[   \bigl(\dumot\otimes_{\CC_\infty[t]} \tate \bigr)^\sigma\longrightarrow 
\Ker\bigl(\phi_t-t:E\cs{t}\to E\cs{t}\bigr)= H \]
to the isomorphism above.
\end{rem}

\begin{rem}
Working with $\dumot\otimes_{\CC_\infty}\powser$ etc. instead of $\dumot\otimes_{\CC_\infty} \tate$, one obtains in the same way an isomorphism
\[  \hat{H}\longrightarrow \left(\dumot\otimes_{\CC_\infty[t]} \powser\right)^\sigma. \]
\end{rem}

\section{The matrix which specializes to the periods}\label{sec:matrix-for-specialization}

The task of this section is to show how one obtains the coordinates of a basis of the period lattice as special values
of some matrix which meets the conditions of the ABP-criterion resp.~of Theorem \ref{thm:conseq-of-abp}.

\medskip

Throughout the whole section, $E$ is a uniformizable abelian $t$-module over $K$ of dimension $d$ and $\mot$ the associated $t$-motive.
We fix a $K[t]$-basis $\vect{m}=\transp{(m_1,\ldots, m_r)}$
 of the $t$-motive $\mot$, we let $\Theta\in \Mat_{r\times r}(K[t])$ be such that $\tau (\vect{m})=\Theta \vect{m}$, and fix $\Upsilon\in \GL_{r}(\tate)$ a rigid analytic trivialization of $\mot$ with respect to $\vect{m}$, i.e.~$\tau(\Upsilon \vect{m})=\Upsilon \vect{m}$ or in other words,
$\tau(\Upsilon)=\Upsilon\Theta^{-1}$.

\begin{prop}\label{prop:def-of-r}
For any $C\in \GL_r(K[t])$, let $\tilde{\Theta}=C\Theta\sigma(C)^{-1}\in \Mat_{r\times r}(\bar{K}[t])$ and $R:=\tau(\Upsilon)C^{-1}\in \GL_r(\tate)$.
Then for any $l\geq 0$, the pair $(\rho_{[l]}(\tilde{\Theta}),\rho_{[l]}(R))$ meets the conditions of $\Phi$ and $\Psi$ in Thm.~\ref{thm:conseq-of-abp}, i.e.
\begin{align*}
\det(\rho_{[l]}(\tilde{\Theta})) & =c_l\cdot (t-\theta)^{s_l} \text{ for some }c_l\in \bar{K}^\times, s_l\geq 1,\\
\sigma\bigl(\rho_{[l]}(R)\bigr) & = \rho_{[l]}(R)\cdot \rho_{[l]}(\tilde{\Theta}).
\end{align*}
\end{prop}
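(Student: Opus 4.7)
The plan is to prove the base case $l=0$ by direct computation from the rigid analytic trivialization equation and the constraint $C\in \GL_r(K[t])$, and then lift to arbitrary $l\geq 0$ by invoking the prolongation properties of $\rho_{[l]}$ already recalled in Section~\ref{subsec:basic-objects}.

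First I would verify that $(\tilde{\Theta},R)$ itself meets the conditions of $(\Phi,\Psi)$ in Thm.~\ref{thm:conseq-of-abp}. For the determinant, compute $\det(\tilde{\Theta})=\det(C)\det(\Theta)\sigma(\det(C))^{-1}$. Since $C\in \GL_r(K[t])$, the element $\det(C)$ is a unit in $K[t]$, hence lies in $K^\times$, and in particular $\sigma(\det(C))\in \bar{K}^\times$ is a nonzero constant. Together with the standard fact recalled in Section~\ref{subsec:basic-objects} that $\det(\Theta)=c'(t-\theta)^s$ with $c'\in K^\times$ and $s\geq 1$, this yields $\det(\tilde{\Theta})=c(t-\theta)^s$ for some $c\in \bar{K}^\times$. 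That $R\in \GL_r(\tate)$ is immediate from $\tau(\Upsilon)\in \GL_r(\tate)$ and $C^{-1}\in \GL_r(K[t])\subset \GL_r(\tate)$.

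Next I would check the $\sigma$-equation $\sigma(R)=R\tilde{\Theta}$. Using $\sigma\tau=\id$, one computes $\sigma(R)=\sigma(\tau(\Upsilon))\sigma(C)^{-1}=\Upsilon\,\sigma(C)^{-1}$. On the other hand, $R\tilde{\Theta}=\tau(\Upsilon)C^{-1}\cdot C\Theta\sigma(C)^{-1}=\tau(\Upsilon)\Theta\sigma(C)^{-1}$, and the defining property $\tau(\Upsilon)=\Upsilon\Theta^{-1}$ of the rigid analytic trivialization gives $\tau(\Upsilon)\Theta=\Upsilon$, so both sides coincide.

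Finally, to handle general $l\geq 0$, I would invoke the facts from Section~\ref{subsec:basic-objects} that $\rho_{[l]}$ is a ring homomorphism and commutes with $\sigma$ (since the hyperdifferential operators $\hde{n}$ commute with $\sigma$). Then $\sigma(\rho_{[l]}(R))=\rho_{[l]}(\sigma(R))=\rho_{[l]}(R\tilde{\Theta})=\rho_{[l]}(R)\rho_{[l]}(\tilde{\Theta})$ gives the $\sigma$-equation, while the block upper triangular shape of $\rho_{[l]}(\tilde{\Theta})$ with $l+1$ copies of $\tilde{\Theta}$ on the diagonal yields $\det(\rho_{[l]}(\tilde{\Theta}))=\det(\tilde{\Theta})^{l+1}=c^{l+1}(t-\theta)^{s(l+1)}$, so one may take $c_l:=c^{l+1}\in \bar{K}^\times$ and $s_l:=s(l+1)\geq 1$. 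I do not anticipate any real obstacle: the argument is a direct manipulation using the defining relations, and all of the input facts — the shape of $\det(\Theta)$, the rigid analytic trivialization equation, and the compatibility of $\rho_{[l]}$ with both multiplication and $\sigma$ — are already in place.
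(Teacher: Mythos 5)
Your proposal is correct and follows essentially the same route as the paper: compute $\det(\tilde{\Theta})$ directly using that $\det(C)\in K^\times$, verify $\sigma(R)=R\tilde{\Theta}$ from $\sigma\tau=\id$ and $\tau(\Upsilon)\Theta=\Upsilon$, and then lift to all $l\geq 0$ via $\rho_{[l]}$ being a ring homomorphism commuting with $\sigma$ and satisfying $\det(\rho_{[l]}(\tilde{\Theta}))=\det(\tilde{\Theta})^{l+1}$. The only cosmetic difference is that you verify the $\sigma$-equation by computing both sides separately rather than chaining equalities, which is immaterial.
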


\begin{proof}
As $C\in \GL_r(K[t])$, its determinant $\det(C)$ is a unit in $K[t]$, hence $\det(C)\in K^\times$. Since by definition of a $t$-motive $\det(\Theta)=c(t-\theta)^s$ for some $c\in K^\times$ and $s\geq 1$, we have
\[ \det(\tilde{\Theta})=\det(C)\det(\Theta)\sigma(\det(C))^{-1}
=c_0(t-\theta)^s\]
where $c_0=c\cdot \det(C)\cdot \sigma(\det(C))^{-1}\in \bar{K}^\times$.
Furthermore by definition,
\[  \sigma(R)=\sigma( \tau(\Upsilon)C^{-1})=\Upsilon \sigma(C)^{-1}=\tau(\Upsilon)\Theta\sigma(C)^{-1}=R\cdot \tilde{\Theta}.\]
Let now $l\geq 0$ be arbitrary. By definition of $\rho_{[l]}$, one has $\det(\rho_{[l]}(\tilde{\Theta}))=\det(\tilde{\Theta})^{l+1}$ which implies the first equation.
The second equation is just a consequence of the fact that $\rho_{[l]}$ is multiplicative and commutes with~$\sigma$.
\end{proof}

\begin{thm}\label{thm:periods-as-special-values}
There is a matrix $B\in \GL_r(K[t])$, and a coordinate system of $E$ with corresponding isomorphism $E\cong \GG_{a}^d$
such that the coordinates of a basis of the period lattice are the values at $t=\theta$ 
of certain entries of the matrix $\rho_{[d-1]}( R^{-1})=\rho_{[d-1]}( R)^{-1}$ where $R:=\tau(\Upsilon)B^{-1}\in \GL_r(\tate)$.
\end{thm}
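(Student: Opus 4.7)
The plan is to identify the coordinates of a chosen period basis as explicit entries of $\rho_{[d-1]}(R^{-1})|_{t=\theta}$ by combining the isomorphism $\iota\circ\delta:\Lambda\to\Hom_{K[t]}^\tau(\mot,\tate)$ from Sections \ref{sec:H}--\ref{sec:dual-t-motive} with a Laurent-expansion analysis at $t=\theta$. First, I align $\Upsilon^{-1}$ with the period data. Since $E$ is abelian and uniformizable, $\Lambda$ is $\FF_q[t]$-free of rank $r$ by Proposition \ref{prop:lambda-fin-gen}; fix a basis $\lambda_1,\ldots,\lambda_r$ and form $\vect{y}_{\lambda_k}:=(m_j(h_{\lambda_k}))_{j=1}^r\in\tate^r$. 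By Remark \ref{rem:tau-difference-solutions}, each $\vect{y}_{\lambda_k}$ satisfies $\tau(\vect{y})=\Theta\vect{y}$, and so do the columns of $\Upsilon^{-1}$; both are $\FF_q[t]$-bases of this rank-$r$ solution module, hence there is $V\in\GL_r(\FF_q[t])$ with $(\vect{y}_{\lambda_1}|\cdots|\vect{y}_{\lambda_r})=\Upsilon^{-1}V$. Replacing $\Upsilon$ by the rigid analytic trivialization $V^{-1}\Upsilon$ (which satisfies the same functional equation since $\tau(V)=V$), I may assume the columns of $\Upsilon^{-1}$ are exactly the $\vect{y}_{\lambda_k}$.

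Next, fix a coordinate system $E\cong\GG_a^d$ with coordinate functions $\kappa_1,\ldots,\kappa_d\in\mot$, and expand $\kappa_i=\sum_{j=1}^rC_{ij}(t)\,m_j$ with $C\in\Mat_{d\times r}(K[t])$. Proposition \ref{prop:inverse-of-delta-explicit}, applied coordinate-wise, gives
\[
\kappa_i(\lambda_k)=-\res\!\bigl((C\Upsilon^{-1})_{ik}\,dt\bigr),
\]
since $\iota(h_{\lambda_k})(\kappa_i)=\sum_jC_{ij}\,m_j(h_{\lambda_k})=(C\vect{y}_{\lambda_k})_i$ by Theorem \ref{thm:h-isom-M-tate-dual}. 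From $R^{-1}=B\Theta\Upsilon^{-1}$ one extracts $\Upsilon^{-1}=\Theta^{-1}B^{-1}R^{-1}$, so $\kappa_i(\lambda_k)=-\res\!((AR^{-1})_{ik}\,dt)$ with the extraction matrix $A:=C\,\Theta^{-1}B^{-1}\in\Mat_{d\times r}(K(t))$. Since $\det\Theta=c(t-\theta)^d$ for uniformizable abelian $t$-modules of dimension $d$, each entry of $A$ has a pole of order at most $d$ at $t=\theta$, while $R^{-1}$ is regular and invertible there by Proposition \ref{prop:def-of-r}.

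The core of the proof is to choose the coordinate system of $E$ together with $B\in\GL_r(K[t])$ so that each row of $A$ carries exactly one monomial pole at $t=\theta$: for each $i\in\{1,\ldots,d\}$ there exist $j_i\in\{1,\ldots,r\}$, $e_i\in\{1,\ldots,d\}$, and $a_i\in K^\times$ with
\[
A_{i,j_i}=a_i(t-\theta)^{-e_i}+(\text{regular at }\theta),\qquad A_{ij}\ \text{regular for}\ j\neq j_i.
\]
Existence of such a choice amounts to a Smith-normal-form-type reduction of $\Theta$ over the discrete valuation ring $K[t]_{(t-\theta)}$ (whose elementary divisors $(t-\theta)^{e_1},\ldots,(t-\theta)^{e_r}$ sum to $d$), together with an adapted choice of coordinates on $E$. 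This is the main obstacle of the argument; the Carlitz $n$-th tensor-power case ($r=1$, $B=I$, $\kappa_i=(t-\theta)^{i-1}m_1$, hence $A_{i,1}=(t-\theta)^{i-1-d}$) is the model.

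Granting this choice, Taylor expansion of $R^{-1}$ at $t=\theta$ collapses the residue to a single term,
\[
\kappa_i(\lambda_k)=-a_i\,\hde{e_i-1}\!\bigl((R^{-1})_{j_i,k}\bigr)\big|_{t=\theta},
\]
which, up to the scalar $-a_i\in K^\times$, is the $(j_i,k)$-entry of the $e_i$-th block in the first block-row of $\rho_{[d-1]}(R^{-1})|_{t=\theta}$. Absorbing these scalars by the diagonal coordinate rescaling $\kappa_i\mapsto-\kappa_i/a_i$ (an automorphism of $E\cong\GG_a^d$ defined over $K$) yields the coordinates of $\lambda_1,\ldots,\lambda_r$ literally as the stated entries of $\rho_{[d-1]}(R^{-1})|_{t=\theta}$.
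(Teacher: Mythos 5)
Your route is the paper's route: the isomorphism $\Lambda\to H$ via $\delta$ together with the residue map $-\res$, the extraction matrix $A=C\Theta^{-1}B^{-1}$ (which is exactly the paper's $A'$ from Proposition~\ref{prop:appropriate-choice-of-coordinates}), and the residue-to-hyperderivative translation landing in $\rho_{[d-1]}(R^{-1})\big|_{t=\theta}$. But the step you openly ``grant'' --- that coordinates on $E$ and a matrix $B\in\GL_r(K[t])$ can be chosen so that every row of $A$ has a single-monomial singular part at $t=\theta$ --- is where the entire content of the theorem lives, and leaving it unproven leaves the proof unfinished. The paper proves it as Proposition~\ref{prop:appropriate-choice-of-coordinates}: apply the elementary divisor theorem to $\mot_1=K[t]\tau(\mot)\subseteq\mot$ to get a $K[t]$-basis $\{n_1,\ldots,n_r\}$ of $\mot$ and integers $\alpha_i$ with $\{(t-\theta)^{\alpha_i}n_i\}$ a basis of $\mot_1$; the crucial further observation is that $\{(t-\theta)^{\beta}n_i : 0\le\beta<\alpha_i\}$, being a $K$-basis of $\mot/\mot_1$, is a $K\{\tau\}$-basis of $\mot$ and can therefore be taken as the coordinate functions $\kappa_1,\ldots,\kappa_d$ of $E$. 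With $B$ the base change from $\Theta\vect{m}$ to $\bigl((t-\theta)^{\alpha_i}n_i\bigr)_i$, each row of $A'$ becomes literally $(t-\theta)^{-\gamma}s_i$, with no regular tail and coefficient $1$. This link --- a $(t-\theta)$-adapted $K[t]$-basis of $\mot$ producing a $K\{\tau\}$-basis, i.e.\ a coordinate system on $E$ --- is precisely the idea your proposal names (``an adapted choice of coordinates on $E$'') but does not supply, and without it you have no leverage on $C$ to force $A$ into the required shape.

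Two smaller points. The opening normalization $\Upsilon\mapsto V^{-1}\Upsilon$ is unnecessary: the theorem asserts only that \emph{some} basis of $\Lambda$ appears, and Proposition~\ref{prop:base-change-motive-to-coordinates} already shows that the columns of $A\Upsilon^{-1}$ span $H$, so any rigid analytic trivialization will do. Also, ``exactly one monomial pole'' must be read as saying the full singular part of $A_{i,j_i}$ at $t=\theta$ is a single term $a_i(t-\theta)^{-e_i}$; if lower-order pole terms were present, the residue against the $j_i$-th row of $R^{-1}$ would be a sum of hyperderivative values rather than a single matrix entry. The elementary-divisor construction delivers this automatically, and with $a_i=1$, so the final rescaling of coordinates is likewise superfluous.
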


The proof takes the rest of the section. Actually, we will see that the proof is constructive and
shows in explicit examples which matrix entries are relevant.

%
%
%

\begin{prop}\label{prop:base-change-motive-to-coordinates}
Let $\kappa_1,\ldots, \kappa_d:E\to \GG_a$ be the coordinate functions corresponding to a choice of coordinates $E(K)\cong K^d$, and $A\in \Mat_{d\times r}(K[t])$ be such that
\[  \svect{\kappa}{d} = A\cdot \svect{m}{r}\]
(which exists, since the coordinate functions are special elements of $\mot$). 
Let $\Upsilon\in \GL_{r}(\tate)$ be the rigid analytic trivialization of $\mot$ fixed above.

Then an $\FF_q[t]$-basis of $H\subseteq E\cs{t}\cong (\tate)^d$ is given by the columns of
$ A\cdot \Upsilon^{-1}. $
\end{prop}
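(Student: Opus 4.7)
The plan is to transport the question from $H$ to $\Hom_{K[t]}^\tau(\mot,\tate)$ via the isomorphism $\iota$ of Theorem~\ref{thm:h-isom-M-tate-dual}, solve the resulting $\tau$-difference system using $\Upsilon^{-1}$, and then push the answer back to $H$ by using the coordinate functions $\kappa_j$.

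First I would introduce the dual basis $\{\mu_1,\ldots,\mu_r\}\subseteq \Hom_{K[t]}(\mot,K[t])$ to $\{m_1,\ldots,m_r\}$. By Remark~\ref{rem:tau-difference-solutions}, writing $\mu=\sum_{i=1}^r x_i\mu_i$ with $x_i\in\tate$ identifies $\Hom_{K[t]}^\tau(\mot,\tate)$ with the $\FF_q[t]$-module of solutions $\vect{x}\in\tate^r$ of the difference equation $\tau(\vect{x})=\Theta\vect{x}$. Inverting the defining relation $\tau(\Upsilon)=\Upsilon\Theta^{-1}$ yields $\tau(\Upsilon^{-1})=\Theta\Upsilon^{-1}$, so the $r$ columns of $\Upsilon^{-1}$ are solutions of this equation.

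Next I would verify that these columns form an $\FF_q[t]$-basis of the solution space. Since $\Upsilon^{-1}\in\GL_r(\tate)$, any $\vect{y}\in\tate^r$ can be uniquely written as $\vect{y}=\Upsilon^{-1}\vect{f}$ with $\vect{f}\in\tate^r$. Then $\tau(\vect{y})=\Theta\vect{y}$ becomes $\Theta\Upsilon^{-1}\tau(\vect{f})=\Theta\Upsilon^{-1}\vect{f}$, forcing $\tau(\vect{f})=\vect{f}$, i.e.\ $\vect{f}\in\FF_q[t]^r$ (using $(\tate)^\tau=\FF_q[t]$). Hence the columns of $\Upsilon^{-1}$ give an $\FF_q[t]$-basis of $\Hom_{K[t]}^\tau(\mot,\tate)$, which by $\iota^{-1}$ corresponds to an $\FF_q[t]$-basis of $H$.

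Finally, I would translate each basis element back to its coordinate representation in $E\cs{t}\cong(\tate)^d$. For $h=\sum_i e_i t^i\in H$, the $j$-th coordinate under the identification through $\kappa_1,\ldots,\kappa_d$ is $\sum_i\kappa_j(e_i)t^i=\iota(h)(\kappa_j)$. Using $\kappa_j=\sum_{i=1}^r A_{ji}m_i$ and $\iota(h)=\sum_i x_i\mu_i$, one computes
\[ \iota(h)(\kappa_j)=\sum_{i=1}^r A_{ji}\iota(h)(m_i)=\sum_{i=1}^r A_{ji}x_i=(A\vect{x})_j. \]
Therefore, the basis element of $H$ corresponding to the $k$-th column of $\Upsilon^{-1}$ has coordinate vector equal to the $k$-th column of $A\cdot\Upsilon^{-1}$, and we are done.

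The only nontrivial step is the basis argument in paragraph two; everything else is essentially bookkeeping with the isomorphism $\iota$ and the defining relation between $\kappa_j$ and the chosen basis $\vect{m}$. I expect no serious obstacle beyond keeping the conventions (rows vs.\ columns, $\tau(\Upsilon)=\Upsilon\Theta^{-1}$ vs.\ $\tau(\Upsilon^{-1})=\Theta\Upsilon^{-1}$) straight.
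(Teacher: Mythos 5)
Your proposal is correct and follows essentially the same route as the paper: transport to $\Hom_{K[t]}^\tau(\mot,\tate)$ via $\iota$, identify it with solutions of $\tau(\vect{x})=\Theta\vect{x}$ using the dual basis and Remark~\ref{rem:tau-difference-solutions}, observe $\tau(\Upsilon^{-1})=\Theta\Upsilon^{-1}$, and pull back through $A$. The only difference is that you explicitly justify that the columns of $\Upsilon^{-1}$ span the full solution space (via $\vect{y}=\Upsilon^{-1}\vect{f}$ and $(\tate)^\tau=\FF_q[t]$), a point the paper leaves implicit behind the citation of Remark~\ref{rem:tau-difference-solutions}.
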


\begin{proof}
If $h=\sum_{i}e_it^i\in H$ with corresponding element $\mu\in \Hom_{K[t]}^\tau( \mot, \tate)$ via the isomorphism $\iota$ in Theorem~\ref{thm:h-isom-M-tate-dual}, then in coordinates we have
\[  h=\left(\begin{smallmatrix}\sum_{i}\kappa_1(e_i)t^i \\
\vdots \\ \sum_{i}\kappa_d(e_i)t^i 
\end{smallmatrix}\right)=\left(\begin{smallmatrix} \mu(\kappa_1)
\\ \vdots \\ \mu(\kappa_d)\end{smallmatrix}\right)
= A\cdot \left(\begin{smallmatrix} \mu(m_1)
\\ \vdots \\ \mu(m_r)\end{smallmatrix}\right).\]
Let $\{\mu_1,\ldots,\mu_r\}$ be the basis of $\Hom_{K[t]}( \mot,K[t])\subseteq \Hom_{K[t]}( \mot, \tate)$ which is dual to $\vect{m}$, then the vector 
$ \left(\begin{smallmatrix} \mu(m_1)
\\ \vdots \\ \mu(m_r)\end{smallmatrix}\right)$ is just the representation of $\mu$ in the basis $\{\mu_1,\ldots,\mu_r\}$. 
Furthermore, since $\tau(\Upsilon)=\Upsilon\Theta^{-1}$, we have
$\tau(\Upsilon^{-1})=\Theta\Upsilon^{-1}$. Therefore, the columns of $\Upsilon^{-1}$ 
are solutions of the $\tau$-difference equation
\[   \tau\Bigl( \svect{x}{r}\Bigr) =  \Theta \svect{x}{r},\]
and hence provide an $\FF_q[t]$-basis of 
$\Hom_{K[t]}^\tau( \mot, \tate)$ with respect to the basis $\{\mu_1,\ldots,\mu_r\}$
(see Rem.~\ref{rem:tau-difference-solutions}).

Hence, an $\FF_q[t]$-basis of $H\subseteq (\tate)^d$ is given by the columns of
$A\cdot \Upsilon^{-1}$.
\end{proof}

\begin{prop}\label{prop:appropriate-choice-of-coordinates}
Let $s_i=\begin{pmatrix} 0&\ldots & 0 & 1 & 0 & \ldots & 0 \end{pmatrix}\in K^{1\times r}$ be the $i$-th standard basis vector for
$1\leq i\leq r$, and let $\Theta\in \Mat_{r\times r}(K[t])$ be such that $\tau (\vect{m})=\Theta \vect{m}$.
Then there
is $B\in \GL_r(K[t])$, a choice of coordinate functions $\kappa_1,\ldots, \kappa_d$ and $A'\in \Mat_{d\times r}(K(t))$
where each row of $A'$ is of the form
\[     (t-\theta)^{-\gamma}\cdot s_i\]
for positive integers $\gamma$ such that 
\[ \svect{\kappa}{d} = A'B\Theta \cdot \svect{m}{r}.\]
\end{prop}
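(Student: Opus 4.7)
My plan is to apply the Smith normal form to $\Theta$ over the principal ideal domain $K[t]$. First I would observe that $\det\Theta = c(t-\theta)^d$ with $c \in K^\times$: since $E \cong \GG_a^d$ as an $\FF_q$-module scheme, $\mot \cong K\{\tau\}^d$ as a left $K\{\tau\}$-module, so $\dim_K(\mot/\tau\mot) = d$; on the other hand, the elementary-divisor theorem applied to the inclusion $\tau\mot \hookrightarrow \mot$ (the submodule with $K[t]$-basis $\Theta\vect{m}$) gives $\dim_K(\mot/\tau\mot) = \deg\det\Theta$. Hence every elementary divisor of $\Theta$ is a power of $(t-\theta)$, and there exist $U, V \in \GL_r(K[t])$ with
\[ U\Theta V = D := \diag\bigl((t-\theta)^{\gamma_1},\ldots,(t-\theta)^{\gamma_r}\bigr), \]
where $0 \leq \gamma_1 \leq \cdots \leq \gamma_r$ and $\sum_i \gamma_i = d$.

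I would then set $B := U$ and introduce the new $K[t]$-basis $\vect{n} := V^{-1}\vect{m}$ of $\mot$, so that $B\Theta\vect{m} = D\vect{n}$; the $i$-th entry of $B\Theta\vect{m}$ is thus precisely $(t-\theta)^{\gamma_i} n_i$. In this basis $\tau\mot = \bigoplus_{i=1}^r (t-\theta)^{\gamma_i} K[t] n_i$, so the elements $(t-\theta)^k n_i$ with $0 \leq k < \gamma_i$ descend to a $K$-basis of $\mot/\tau\mot$ (of total size $\sum_i \gamma_i = d$). I would enumerate them as $\kappa_j = (t-\theta)^{k_j} n_{i_j}$ for $j = 1,\ldots,d$ and declare these to be the coordinate functions of $E$. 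Finally, I would define $A' \in \Mat_{d\times r}(K(t))$ by taking its $j$-th row to be $(t-\theta)^{-(\gamma_{i_j}-k_j)} s_{i_j}$, where the exponent $\gamma_{i_j}-k_j \geq 1$ is indeed positive. A direct computation then yields
\[ (A'B\Theta\vect{m})_j = (t-\theta)^{-(\gamma_{i_j}-k_j)} \cdot (t-\theta)^{\gamma_{i_j}} n_{i_j} = (t-\theta)^{k_j} n_{i_j} = \kappa_j, \]
which is exactly the required identity.

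The main subtle step will be to verify that this choice of $\kappa_1,\ldots,\kappa_d$ genuinely arises from an isomorphism $E \cong \GG_a^d$. Since $\mot$ is free of rank $d$ as a left $K\{\tau\}$-module, coordinate systems of $E$ are in bijection with $K\{\tau\}$-bases of $\mot$; one therefore needs the Nakayama-type assertion that any $d$ elements of $\mot$ whose classes form a $K$-basis of $\mot/\tau\mot$ already constitute a $K\{\tau\}$-basis. This is standard in the theory of $t$-modules (one can argue via the identification $\mot/\tau\mot = \mot \otimes_{K\{\tau\}} K$ together with the graded structure of $K\{\tau\}$), but apart from this it is the only non-computational ingredient in the proof.
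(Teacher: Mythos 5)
Your proof is correct and takes essentially the same route as the paper: putting $\Theta$ into Smith normal form $U\Theta V=\diag((t-\theta)^{\gamma_i})$ over $K[t]$ is precisely the matrix form of the paper's application of the elementary-divisor theorem to the inclusion $K[t]\tau(\mot)\hookrightarrow\mot$, with your $B=U$ and $\vect{n}=V^{-1}\vect{m}$ matching the paper's base-change matrix and new basis. Both arguments then pick the same coordinate functions $(t-\theta)^k n_i$ ($0\le k<\gamma_i$) and rely on the same Nakayama-type fact that lifts of a $K$-basis of $\mot/K[t]\tau(\mot)$ form a $K\{\tau\}$-basis of $\mot$, which you flag explicitly while the paper invokes it silently.
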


\begin{proof}
Let $\mot_1=K[t]\tau(\mot)$ denote the $K[t]$-submodule of  $\mot$ generated by the image of $\tau$, i.e.~generated by $\tau(m_1),\ldots, \tau(m_r)$.
By definition of a $t$-motive, the quotient $K[t]$-module $\mot/\mot_1$ is a $(t-\theta)$-torsion module.
Therefore by the elementary divisor theorem, there is a $K[t]$-basis $\{ n_1,\ldots, n_r\}$ of $\mot$ and
integers $\alpha_1,\ldots, \alpha_r$ 
such that
$\{(t-\theta)^{\alpha_1}n_1,\ldots, (t-\theta)^{\alpha_r}n_r\}$ is a $K[t]$-basis of $\mot_1$. In particular, the residue classes of 
$\{ (t-\theta)^{\beta_i}n_i \mid 1\leq i\leq r, 0\leq \beta_i<\alpha_i\}$ form a $K$-basis of $\mot/\mot_1$,
and hence $\{ (t-\theta)^{\beta_i}n_i \mid 1\leq i\leq r, 0\leq \beta_i<\alpha_i\}$ is a $K\{\tau\}$-basis of $\mot$. Thus, we can choose the coordinate functions $\kappa_1,\ldots, \kappa_d$ to be these functions, i.e.~each $\kappa_j$ equals some $(t-\theta)^{-\gamma_i}\cdot \left( (t-\theta)^{\alpha_i}n_i\right)$ for appropriate $i$ and $0<\gamma_i\leq \alpha_i$. This means that
\[
 \svect{\kappa}{d} = A' \cdot  \begin{pmatrix} (t-\theta)^{\alpha_1}n_1\\ \vdots \\  (t-\theta)^{\alpha_r}n_r\end{pmatrix} \]
for a matrix $A'$ as given in the statement of the proposition.

On the other hand, $\tau(\vect{m})=\Theta\vect{m}$ is another $K[t]$-basis of $\mot_1$. Hence, there is a base change matrix $B\in \GL_r(K[t])$ such that
\[   \begin{pmatrix} (t-\theta)^{\alpha_1}n_1\\ \vdots\\  (t-\theta)^{\alpha_r}n_r\end{pmatrix}
= B \cdot \Theta \svect{m}{r}.\]
Putting this into the previous equation, leads to
\[ \svect{\kappa}{d} = A'B\Theta \cdot \svect{m}{r}. \qedhere \]
\end{proof}

Combining the previous results with the description of the isomorphism $-\delta$ in Prop.~\ref{prop:inverse-of-delta-explicit}, we obtain the following corollary.

\begin{cor}\label{cor:periods-as-residues}
Let the coordinate functions $\kappa_1,\ldots, \kappa_d$ be chosen as in the previous proposition, and $A'$ and $B$ the corresponding matrices. Then a basis of the period lattice is given (with respect to the chosen coordinates) by the columns of the matrix
\[    \res\left( A'B\Theta\Upsilon^{-1}\, dt\right).
\]
\end{cor}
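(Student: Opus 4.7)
The plan is to chain together three earlier results with essentially no new input: the description of an $\FF_q[t]$-basis of $H$ in Proposition \ref{prop:base-change-motive-to-coordinates}, the particular coordinate system produced by Proposition \ref{prop:appropriate-choice-of-coordinates}, and the explicit formula $\delta^{-1}(h)=-\res(h\,dt)$ from Proposition \ref{prop:inverse-of-delta-explicit}. Everything needed is already in place.

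First, I would fix the coordinate functions $\kappa_1,\ldots,\kappa_d$ exactly as provided by Proposition \ref{prop:appropriate-choice-of-coordinates}, together with the matrix $A'\in\Mat_{d\times r}(K(t))$ of the stated special form and the base-change matrix $B\in\GL_r(K[t])$, which satisfy
\[ \svect{\kappa}{d} = (A'B\Theta)\cdot \svect{m}{r}. \]
Setting $A:=A'B\Theta\in\Mat_{d\times r}(K(t))$, this is precisely the hypothesis of Proposition \ref{prop:base-change-motive-to-coordinates} (the proposition is stated for $A\in\Mat_{d\times r}(K[t])$, but its proof uses only the identity $\vect{\kappa}=A\vect{m}$ in $\mot\otimes_{K[t]}\tate$, so it applies verbatim). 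That proposition then yields that in the chosen coordinates an $\FF_q[t]$-basis of $H\subseteq E\cs{t}\cong(\tate)^d$ is given by the columns of $A\cdot\Upsilon^{-1}=A'B\Theta\,\Upsilon^{-1}$.

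Next, Theorem \ref{thm:iso-delta} supplies an isomorphism of $\FF_q[t]$-modules $\delta:\Lambda\to H$, so a basis of $\Lambda$ is obtained by applying $\delta^{-1}$ to a basis of $H$. By Proposition \ref{prop:inverse-of-delta-explicit}, $\delta^{-1}$ is computed coordinate-wise as $h\mapsto -\res(h\,dt)$. Applying this column-wise to the matrix $A'B\Theta\,\Upsilon^{-1}$ exhibits the coordinates of a basis of $\Lambda$ as the columns of $-\res\bigl(A'B\Theta\,\Upsilon^{-1}\,dt\bigr)$. Since $-1\in\FF_q^\times$, multiplying a basis of the $\FF_q[t]$-module $\Lambda$ by $-1$ again yields a basis, so the sign may be dropped, giving the formula in the statement.

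There is no real obstacle; the only tiny technical point is to check that the residue applied column-wise to $A'B\Theta\,\Upsilon^{-1}$ agrees with the coordinate-wise residue of each basis element of $H$ regarded as an element of $E\cs{t}\cong(\tate)^d$, but this is immediate from how the column identification was set up in Proposition \ref{prop:base-change-motive-to-coordinates} and is precisely the coordinate-wise convention under which Proposition \ref{prop:inverse-of-delta-explicit} was stated.
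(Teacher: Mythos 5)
Your proof is correct and follows essentially the same route the paper intends: combine Propositions \ref{prop:appropriate-choice-of-coordinates} and \ref{prop:base-change-motive-to-coordinates} to identify the columns of $A'B\Theta\,\Upsilon^{-1}$ with a basis of $H$ in the chosen coordinates, then apply the explicit formula $\delta^{-1}(h)=-\res(h\,dt)$ from Proposition \ref{prop:inverse-of-delta-explicit} and drop the harmless sign. One small remark: your aside about $A:=A'B\Theta$ possibly lying only in $\Mat_{d\times r}(K(t))$ is unnecessary, since $A$ is the coordinate matrix of the elements $\kappa_j\in\mot$ with respect to the $K[t]$-basis $\vect{m}$ (the exponents satisfy $\gamma_j\le\alpha_{i_j}$ by construction), so $A$ automatically has entries in $K[t]$ and Proposition \ref{prop:base-change-motive-to-coordinates} applies directly.
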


For relating these residues with values of hyperderivatives, we need the following lemma.

\begin{lem}
Let $f\in \CC_\infty\ls{t-\theta}$, and $l\in \NN$ such that $(t-\theta)^l\cdot f\in \CC_\infty\ps{t-\theta}$. Then
 one has
\[   \res(f\, dt)= \hd{l-1}{(t-\theta)^l\cdot f}\Big|_{t=\theta}. \]
\end{lem}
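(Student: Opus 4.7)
The plan is to reduce both sides to the coefficient $a_{-1}$ of $(t-\theta)^{-1}$ in the Laurent expansion of $f$ at $t=\theta$.

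First I would write $f=\sum_{i\geq -l} a_i (t-\theta)^i$ with $a_i\in \CC_\infty$ (since $(t-\theta)^l f\in \CC_\infty\ps{t-\theta}$ by hypothesis). By definition of the residue at $t=\theta$, $\res(f\,dt)=a_{-1}$, so it suffices to show
\[ \hd{l-1}{(t-\theta)^l f}\Big|_{t=\theta}=a_{-1}. \]

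The key computation is the formula
\[ \hd{n}{(t-\theta)^j}=\binom{j}{n}(t-\theta)^{j-n}\quad\text{for all }j,n\geq 0, \]
which I would establish by induction on $j$ via the Leibniz rule for hyperderivatives (together with $\hd{0}{t-\theta}=t-\theta$, $\hd{1}{t-\theta}=1$, and $\hd{n}{t-\theta}=0$ for $n\geq 2$); equivalently, it expresses the translation invariance of the divided-power derivation $\hde{n}$. Setting $g:=(t-\theta)^l f=\sum_{j\geq 0} a_{j-l}(t-\theta)^j\in \CC_\infty\ps{t-\theta}$, the $\CC_\infty$-linearity of $\hde{l-1}$ and its continuity with respect to the $(t-\theta)$-adic topology on $\CC_\infty\ps{t-\theta}$ give
\[ \hd{l-1}{g}=\sum_{j\geq 0} a_{j-l}\binom{j}{l-1}(t-\theta)^{j-l+1}. \]
Specializing at $t=\theta$ kills every summand with $j-l+1>0$, i.e.\ every $j\geq l$, and the summands with $j<l-1$ already vanish because of the binomial coefficient. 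Only the single term $j=l-1$ survives, contributing $a_{-1}\cdot\binom{l-1}{l-1}=a_{-1}$. Combining with the first paragraph yields the claim.

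The only step requiring care is the transition from the definition of $\hde{l-1}$ on $\CC_\infty\ls{t}$ to its use on a series in $(t-\theta)$; I expect this to be the main (and only) technical point. It is handled by observing that $\hde{l-1}$ is $\CC_\infty$-linear, satisfies the Leibniz rule, and is continuous for the $(t-\theta)$-adic topology (all three properties follow at once from the definition on $\CC_\infty\ls{t}$ together with the fact that $\bigl(\hde{n}\bigr)_{n\geq 0}$ is an iterative higher derivation). These three properties together pin down $\hde{l-1}$ uniquely on $\CC_\infty\ps{t-\theta}$ and justify the termwise computation above.
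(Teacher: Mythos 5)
Your proposal is correct and follows essentially the same route as the paper: expand $f$ as a Laurent series in $(t-\theta)$, apply $\hde{l-1}$ term by term using $\hd{l-1}{(t-\theta)^j}=\binom{j}{l-1}(t-\theta)^{j-l+1}$, and observe that evaluation at $t=\theta$ isolates the coefficient $a_{-1}$. The only difference is that you make explicit the point (which the paper leaves implicit) that $\hde{l-1}$, a priori defined on $\CC_\infty\ls{t}$, acts termwise and translation-invariantly on $(t-\theta)$-power series; your appeal to $\CC_\infty$-linearity, the Leibniz/iterativity structure, and $(t-\theta)$-adic continuity is a clean way to justify that.
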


\begin{proof}
Write  $f=\sum_{j=-l}^\infty c_j (t-\theta)^j\in \CC_\infty\ls{t-\theta}$, then $ \res(f\, dt)=c_{-1}$.
On the other hand,
\begin{eqnarray*}
\hd{l-1}{(t-\theta)^l\cdot f} &=& \hd{l-1}{\sum_{j=-l}^\infty c_j (t-\theta)^{j+l}} 
=  \sum_{j=-1}^\infty c_j \binom{j+l}{l-1} (t-\theta)^{j+1}.
\end{eqnarray*}
Hence,
\[  \hd{l-1}{(t-\theta)^l\cdot f}\Big|_{t=\theta} = \left(  \sum_{j=-1}^\infty c_j \binom{j+l}{l-1} (t-\theta)^{j+1}\right)\Big|_{t=\theta} = c_{-1}. \]
\end{proof}

\begin{proof}[Proof of Theorem \ref{thm:periods-as-special-values}]
By Corollary \ref{cor:periods-as-residues}, we already know that a basis of the period lattice is given by the columns
of
\[    \res\left( A'B\Theta\Upsilon^{-1}\, dt\right) \]
with an appropriate choice of the coordinate
functions $\kappa_1,\ldots, \kappa_d$ and corresponding $B\in \GL_r(K[t])$ and $A'\in \Mat_{d\times r}(K(t))$ whose
$j$-th row is
\[     (t-\theta)^{-\gamma_j}\cdot s_{i_j}\]
with $s_{i_j}$ being the $i=i_j$-th standard basis vector $s_i=\begin{pmatrix} 0&\ldots & 0 & 1 & \ldots & 0 \end{pmatrix}$ and $0<\gamma_j\leq d$.
Hence the $j$-th row of $A'B\Theta\Upsilon^{-1}$ is the $(t-\theta)^{-\gamma_j}$-multiple of the $i_j$-th row of
$B\Theta\Upsilon^{-1}=B\tau(\Upsilon)^{-1}=R^{-1}$.
Further, $R^{-1}$ does not have a pole at $t=\theta$ (see Remark after Thm.~\ref{thm:conseq-of-abp} and Prop.~\ref{prop:def-of-r}), hence for the $j$-th row of the matrix
$\res\left( A'B\Theta\Upsilon^{-1}\, dt\right)$ we obtain
\begin{eqnarray*}
\res\left( A'B\Theta\Upsilon^{-1}\, dt\right)_j &=&  \res\left( (t-\theta)^{-\gamma_j} R^{-1}\, dt\right)_{i_j}\\
&=&   \hd{\gamma_j-1}{(t-\theta)^{\gamma_j}\cdot  (t-\theta)^{-\gamma_j} R^{-1}}_{i_j}\big|_{t=\theta} \\
&=&  \hd{\gamma_j-1}{R^{-1}}_{i_j}\big|_{t=\theta}.
\end{eqnarray*}

Therefore, each row of $\res\left( A'B\Theta\Upsilon^{-1}\, dt\right)$ is a row in the matrix
\[  \begin{pmatrix}  \hd{d-1}{R^{-1}} \\ \vdots \\   \hd{1}{R^{-1}} \\ R^{-1}
\end{pmatrix}\Big|_{t=\theta} \, . \]
The latter, however, is just the $\bigl( r(d+1)\times r\bigr)$-submatrix of $\rho_{[d-1]}(R^{-1})|_{t=\theta}$ consisting of the last $r$ columns.
\end{proof}

\begin{rem}\label{rem:simpler-description-of-r}
From the presentation in the form $\rho_{[d-1]}(R^{-1})|_{t=\theta}$ we are enabled to use the ABP-criterion for showing transcendence results. In explicit examples, the matrix $R^{-1}=B\Theta\Upsilon^{-1}$ is often described easier. Namely, if
$\{m_1,\ldots, m_r\}$ already is a basis of $\mot$ such that $\{(t-\theta)^{\alpha_1}m_1,\ldots, (t-\theta)^{\alpha_r}m_r\}$ is a $K[t]$-basis of $K[t]\tau(\mot)$, then $B$ can be chosen such that
\[   \begin{pmatrix} (t-\theta)^{\alpha_1}m_1\\ \vdots\\  (t-\theta)^{\alpha_r}m_r\end{pmatrix}
= B \cdot \Theta \svect{m}{r}.\]
Hence, $B\cdot \Theta=  \textrm{diag}\left((t-\theta)^{\alpha_1} , (t-\theta)^{\alpha_2} , \ldots,  (t-\theta)^{\alpha_r}\right)$
and 
\[ R^{-1}=B\Theta \Upsilon^{-1}= \textrm{diag}\left((t-\theta)^{\alpha_1} , (t-\theta)^{\alpha_2} , \ldots,  (t-\theta)^{\alpha_r}\right)\cdot  \Upsilon^{-1}.\]
\end{rem}

\section{Examples}

We illustrate two examples:
\begin{exmp}
If $E=D$ is a Drinfeld module over $K$ of rank $r$,
the associated $t$-motive is
$\mot(D)\cong K\{\tau\}$ with $K[t]$-basis $m_1=1,m_2=\tau,\ldots, m_r=\tau^{r-1}$,
and the inverse of the rigid analytic trivialization $\Upsilon$ is
\[ \Upsilon^{-1}=\begin{pmatrix} g_1 & g_2 & \cdots & g_r \\
\tau(g_1) & \tau(g_2) & \cdots & \tau(g_r)\\
\vdots & & & \vdots \\
\tau^{r-1}(g_1) & \tau^{r-1}(g_2) & \cdots & \tau^{r-1}(g_r)
\end{pmatrix}, \]
for the Anderson generating functions $g_1,\ldots, g_r\in \tate$ associated to a
basis $\lambda_1,\ldots, \lambda_r$ of the lattice $\Lambda$.
 As $K[t]\tau(\mot(D))$ is also generated by 
 $\{(t-\theta)m_1,m_2,\ldots, m_r\}$,  the matrix $R^{-1}$ is just
$R^{-1}=\textrm{diag}\left((t-\theta) , 1 , \ldots, 1\right)\cdot  \Upsilon^{-1},$
by Remark \ref{rem:simpler-description-of-r}.
Furthermore, the coordinate function is $\kappa=(1,0,\ldots,0)\vect{m}$, and hence
we recover the basis of the lattice as the entries in the first row of the matrix

\[ -\rho_{[0]}(R^{-1})|_{t=\theta}=- 
\begin{pmatrix} (t-\theta)g_1 & (t-\theta)g_2 & \cdots & (t-\theta)g_r \\
\tau(g_1) & \tau(g_2) & \cdots & \tau(g_r)\\
\vdots & & & \vdots \\
\tau^{r-1}(g_1) & \tau^{r-1}(g_2) & \cdots & \tau^{r-1}(g_r)
\end{pmatrix}\Big|_{t=\theta} , \]
which perfectly fits with the well-known facts.
\end{exmp}

\begin{exmp}\label{ex:carlitz-tensor-power}
Let $E=C^{\otimes n}$ be the $n$-th tensor power of the Carlitz module which is a $t$-module of dimension $n$ and rank $1$. Its $t$-motive is $\mot(C^{\otimes n})=K[t]\cdot m$ for a basis element $m$ with $\tau$-action given by
\[  \tau(m)=(t-\theta)^n m.\]
A rigid analytic trivialization is given by $\Upsilon=\omega(t)^{-n}$ where
$\omega$ is the Anderson-Thakur function with $-\res(\omega\, dt)=\pitilde$ being the Carlitz period. 
The coordinate functions can be chosen to be $\kappa_i=(t-\theta)^{i-1}m$ for $i=1,\ldots, n$, and hence a basis for the period lattice is given by the vector
\[  \svect{z}{n}=\begin{pmatrix}
 \res (\omega(t)^n) \\
 \res ((t-\theta)\omega(t)^n) \\
\vdots \\
 \res ((t-\theta)^{n-1}\omega(t)^n) 
\end{pmatrix} \]
for $i=1,\ldots, n$. Up to this point, this is already given in \cite[\S 2.5]{ga-dt:tpcmzv}.
However, recognizing that this vector is the last column of 
\[  \rho_{[n-1]}(R^{-1})|_{t=\theta}=  \rho_{[n-1]}((t-\theta)^{n}\omega(t)^n)|_{t=\theta}, \]
and that $\rho_{[n-1]}(R)$ is a rigid analytic trivialization of a dual $t$-motive, 
enabled us in \cite{am:ptmaip} to show that $z_1,\ldots, z_n$ are algebraically independent over $K$ if $n$ is prime to the characteristic of $K$.
\end{exmp}



\bibliographystyle{plain}

\def\cprime{$'$}

\vspace*{.5cm}

\parindent0cm

\end{document}